\newtheorem{theorem}{Theorem}[section]
\newtheorem{lemma}[theorem]{Lemma}
\newtheorem{proposition}[theorem]{Proposition}
\theoremstyle{definition}
\newtheorem{definition}[theorem]{Definition}
\theoremstyle{remark}
\newtheorem{remark}[theorem]{Remark}
\numberwithin{equation}{section}
\begin{document}

\setcounter{page}{1}

\title[Caccioppoli-type inequalities associated with the Dunkl-$A$-Laplacian ]
 {Caccioppoli-type inequalities for the Dunkl-$A$-Laplacian and their application to nonexistence result}

\author[Athulya P and Sandeep Kumar Verma ]{Athulya P and Sandeep Kumar Verma}

\address{Department of Mathematics, SRM University-AP, Amaravati, Guntur--522240, India}

\email{athulya.panoli97@gmail.com, sandeep16.iitism@gmail.com}

\subjclass[2020]{26D10, 35A01, 35J60}

\keywords{Caccioppoli-type inequality, Nonexistence result, Dunkl operator, $A$-Laplacian}

\begin{abstract} 
For a suitable function $A:\mathbb{R}^n\to \mathbb{R}^n$, we introduce the $A$-Laplacian in the Dunkl framework as $\Delta_{k,A}(u) =\text{div}_k(A(\nabla_ku))$, where $\nabla_k$ is the Dunkl-gradient operator associated with the multiplicity function $k$ and the root system $\mathcal{R}$. We derive the local and global Caccioppoli-type inequality for an element $u$ in the Dunkl-Orlicz-Sobolev space, satisfying the Dunkl-differential inequality   $$ -\Delta_{k, A}(u) \geq b\Phi(u)\chi_{\{u>0\}}. $$ Using the Caccioppoli inequality, we establish a sufficient condition for the nonexistence of a nonzero solution $u$ to the Dunkl-differential inequality.
\end{abstract}
\maketitle

\section{Introduction}
The Caccioppoli inequality plays a central role in the qualitative theory of partial differential equations (PDEs). As a foundational estimate, it is instrumental in studying the existence, uniqueness, and regularity properties of solutions to linear and nonlinear PDEs (see, for example, \cite{Gariepy, Giaquinta, Kalamajska-1}). The classical form of the Caccioppoli inequality asserts that if $u$ is a harmonic or subharmonic function, then the $L^2$ norm of its gradient $\nabla u$ can be controlled by the $L^2$ norm of $u$ itself \cite{Caccioppoli}.   The gradient estimates are essential in a wide range of analytical and physical contexts, particularly when $\nabla u$ represents conservative vector fields. This becomes relevant especially in disciplines like electromagnetism, fluid dynamics, and wave propagation, where gradient-based quantities and the Laplacian operator frequently arise. Motivated by these applications, numerous generalizations of the Caccioppoli inequality have been developed to accommodate more complex PDE settings (see \cite{Chlebicka, Ding, Giaquinta-1, Niu, Seregin, Troianiello, Xing} and references therein).   Among these developments, a prominent example is the study of $A$-harmonic equations, which generalize both harmonic and $p$-harmonic equations by incorporating the function \( A \colon \mathbb{R}^n \to \mathbb{R}^n \).  These equations are central in nonlinear analysis and appear in various models involving non-standard growth conditions. To the best of our knowledge, Caccioppoli-type inequalities have not yet been studied in the context of the Dunkl operator, a differential-difference operator associated with finite reflection groups. The Dunkl operator framework naturally arises in harmonic analysis, special functions, and integrable systems, where reflection symmetries play a role. In this article, we derive the Caccioppoli-type inequalities for the differential-difference operator and discuss the sufficient condition for the nonexistence of the nonzero solution.

\par The Dunkl operators, introduced by Dunkl in 1980s \cite{D1, D2, D3}, and subsequently developed by researchers such as de Jeu, Opdam, and R\"osler \cite{DJ1, Opdam, Rosler-2003, Rosler-2008}, are remarkable generalizations of the classical differential operators. One of the central motivations for studying Dunkl operators arises from the theory of Riemannian symmetric spaces, where spherical functions play a crucial role. These spherical functions can be viewed as multivariable special functions that depend on discrete sets of parameters.  In essence, Dunkl operators are families of commuting differential-difference operators acting on Euclidean space. They are equipped with an additional structure determined by a set of parameters called multiplicities. 
\cite{Rosler-2008}. On top of that, Dunkl operators also appear in various areas of mathematical physics. Notably, they are relevant in the study of quantum many-body systems of Calogero–Moser–Sutherland type, which are examples of algebraically integrable systems in one dimension. These systems exhibit rich algebraic and geometric structures, further highlighting the deep connections between Dunkl theory and integrable models in physics (see \cite{DV} for an extensive bibliography).
\par The theory of nonexistence is one of the cornerstones in the study of PDEs. Over the years, extensive work has been devoted to developing nonexistence results for nonlinear elliptic equations and systems; see, for instance, \cite{Baldelli, Bidaut, Caristi, Esteban} and the references therein. Several methods have been introduced to establish the existence and nonexistence of solutions to both equations and inequalities in various contexts \cite{Caristi, Esteban, Gossez, Ni}. In \cite{Kalamajska-1}, Ka{\l}amajska et al. developed nonexistence results associated with a Caccioppoli-type inequality for
$$- \Delta_A u \geq \Phi(u).$$
Here, $\Delta_A$ is the $A$-Laplacian, where $A$ is a suitable function on $\mathbb{R}^n$, $\Phi$ is a continuous function on $[0,\infty)$ under appropriate assumptions and $u$ is the Orlicz-Sobolev function. The proof technique by  Ka{\l}amajska et al. is based on techniques developed by Mitidieri and Pohozaev \cite{Mitidieri}. Later, in 2019, Chlebicka et al. \cite{Chlebicka} studied nonlinear PDEs of the form
\begin{align*}
\operatorname{div}(a(x)|\nabla u|^{p-2} \nabla u) \geq b(x)\Phi(u)\chi_{\{u>0\}}, \quad p>1,
\end{align*}
and investigated the existence and nonexistence of solutions, Caccioppoli-type inequalities, and Hardy-type inequalities. Here, the term $|\nabla u|^{p-2}\nabla u$ represents the $p$-Laplacian, which is a particular case of the $A$-Laplacian. The proof technique for the $p$-Laplacian problem is also derived from \cite{Mitidieri}. One can study these partial differential inequalities (PDIs) in a unified way by considering
\begin{align}
-\Delta_{A}u \geq b(x)\Phi(u)\chi_{\{u>0\}}. \label{Intro-1}
\end{align}
The specific cases of inequality \eqref{Intro-1} have already been examined in the literature. For instance, in 1930, Matukuma \cite{Matukuma} proposed the equation
$$\Delta u + \frac{1}{1+|x|^2}u^p = 0 \quad \text{in } \mathbb{R}^3,\quad p > 1,\ u > 0,$$
as a model for the gravitational potential in a globular star cluster. Similarly, the equation
\[
-\Delta u(x) = \phi(r)|u|^{p-2}u \quad \text{in } \mathbb{R}^3,\quad u > 0,
\]
with \( r = \sqrt{x_1^2 + x_2^2} \) and integrability condition \( \int_{\mathbb{R}^3} \phi(r) u(x)^{p-1} \, dx < \infty \), has been used to describe the density profile of elliptic galaxies \cite{Badiale}. 

\par
To the best of our knowledge, nonexistence results based on Caccioppoli-type inequalities have not yet been established in the Dunkl setting. We generalize inequality \eqref{Intro-1} to the Dunkl framework by replacing the classical differential operator with the Dunkl operator. This leads to the formulation  \begin{align*} 
-\Delta_{k,A}u \geq b(x)\Phi(u)\chi_{\{u>0\}}, \end{align*} 
which we refer to as the Dunkl differential inequality (DDI). Here, \( b(x) > 0 \) is continuous function,  \( \chi_{\{u>0\}} \) denotes the indicator function of the set $\{x\in \mathbb{R}^n: u(x)>0\}$, and the operator $\Delta_{k, A}$, is regarded as the $A$-Laplacian associated with the Dunkl operator. In the Dunkl framework, Liouville-type nonexistence results are already known. Gallardo and Godefroy \cite{Gallardo} extended the classical Liouville theorem by proving that any bounded Dunkl harmonic function on $\mathbb{R}^n$ must be constant. A similar result for Dunkl polyharmonic functions was later established in \cite{Ren}. 
In recent years, significant attention has been given to the study of existence and nonexistence problems for differential inequalities involving the Dunkl Laplacian; see, for example, \cite{Jleli-3, Jleli-1, Jleli-2, Jleli}. Inspired by these contributions, we establish a nonexistence result associated with the DDIs. The arguments used in our proofs are mainly based on the techniques developed in \cite{Chlebicka, Kalamajska-1}, which provide a framework for studying nonlinear differential inequalities and present refined versions of earlier methods. In particular, these works build upon the approach of \cite{Mitidieri}, who developed an effective method for proving nonexistence results through integral estimates with carefully chosen test functions. Motivated by these ideas, we adapt their techniques to the Dunkl setting.
First, we establish a Caccioppoli-type inequality for the  Dunkl-$A$-Laplacian, consisting of a generalized differential operator. Specifically, we prove the Dunkl-Caccioppoli-type inequality for functions \( u \) satisfying the DDI. The Dunkl-Caccioppoli-type inequality reads:\\

\noindent \textbf{Theorem A} (Theorem \ref{thrm-1}):  
Let \( u \in W_k^{1,\Lambda}(\mathbb{R}^n) \) be a $G$-invariant,  nonnegative solution to the DDI,
\[
-\Delta_{k,A}u \geq b(x)\Phi(u)\chi_{\{u>0\}}.
\]
 Then $u$ satisfies the Dunkl-Caccioppoli-type inequality
\begin{align*}
\int_{\{u>0\}} &\left( b(x)\Phi(u(x)) + 
\beta\frac{\Lambda(|\nabla_k u(x)|)}{g(u(x))} \right) 
\psi(u(x)) \phi(x) w_k(x)\,dx \\
&\leq  \int_{\{\nabla_k u \neq 0,\ u>0,\ \phi \neq 0\}} \Lambda\left( \frac{|\nabla_k \phi(x)|}{\phi(x)}g(u(x)) \right) \frac{\psi(u(x))}{g(u(x))}\phi(x) w_k(x)\,dx,
\end{align*}
for every nonnegative, compactly supported Lipschitz function \( \phi \) such that
\[
\int_{\{\phi \neq 0,\ \nabla_k u \neq 0\}} \Lambda\left( \frac{|\nabla_k \phi(x)|}{\phi(x)} \right) \phi(x) w_k(x)\,dx < \infty,
\]
where $\beta = C_\psi-1$, and the functions $ \Lambda,u,b,g,\Phi, \psi$, and $\phi$ satisfy the assumptions of Proposition \ref{loc.est}. Here,
$W_k^{1,\Lambda}(\mathbb{R}^n)$ denotes Dunkl-Orlicz-Sobolev space (see Definition \ref{DOS-space}).

\medskip The Dunkl-Caccioppoli-type inequality is obtained by first proving the following local estimate (see Proposition \ref{loc.est}) 
\begin{align*}
&\int_{\{0<u<l\}} \left( b(x)\Phi(u(x)) + 
\beta\frac{\Lambda(|\nabla_k u(x)|)}{g(u(x))} \right) 
\psi(u(x)) \phi(x) w_k(x)\,dx \\
&\leq \int_{\{\nabla_k u \neq 0,\ 0<u<l-\delta,\ \phi \neq 0\}} 
\Lambda\left( \frac{|\nabla_k \phi(x)|}{\phi(x)}g(u(x)+\delta) \right)
\frac{\psi(u(x)+\delta)}{g(u(x)+\delta)} \phi(x) w_k(x)\,dx \\
&\quad +\psi(l) \int_{\{ u> l\}} 
    \Bigl(
    B(|\nabla_ku(x)|) \langle \nabla_ku(x), \nabla_k\phi(x)\rangle_{\mathbb{R}^n}- b(x) \Phi(u(x))\phi(x)\Bigr)  w_k(x)\,dx.
\end{align*}Then letting the limit \( l \to \infty \). The local inequality is established via two lemmas. In the first lemma 
( Lemma \ref{Lemma-1-main}), we prove:
\begin{align}
&\int_{0<u\leq l-\delta} \left( b(x)\Phi(u(x)) + 
\beta\frac{\Lambda(|\nabla_k u(x)|)}{g(u(x)+\delta)} \right) 
\psi(u(x)+\delta) \phi(x) w_k(x)\,dx \notag \\
&\leq \int_{\{\nabla_k u \neq 0,\ 0<u<l-\delta,\ \phi \neq 0\}} 
\Lambda\left( \frac{|\nabla_k \phi(x)|}{\phi(x)}g(u(x)+\delta) \right)
\frac{\psi(u(x)+\delta)}{g(u(x)+\delta)} \phi(x) w_k(x)\,dx \notag \\
& + \psi(l) \int_{\{ u > l-\delta \}} \left(
B(|\nabla_k u|)\langle \nabla_k u, \nabla_k \phi \rangle 
- b(x)\Phi(u(x))\phi(x) \right) w_k(x)\,dx. \label{intro-2}
\end{align}

\noindent In the second lemma  (Lemma \ref{Lemma-2-main}), we complete the proof by taking the limit \( \delta \searrow 0 \) in \eqref{intro-2}, using the Lebesgue dominated convergence theorem. With the help of the Dunkl-Caccioppoli-type inequality, we prove the following nonexistence result.\\

\noindent \textbf{Theorem B} (Theorem \ref{Ex-thrm}): 
Let $\Lambda$ be a non-decreasing function satisfying the $M$-condition, and define
\begin{align*}
    \mathfrak{F}(l)=\frac{\Lambda\left(\frac{1}{l}\right) l^n }{\|b\|_{L^\infty(B(x_0,2l))}^{\frac{t}{1-t}} (F^*\circ \Lambda)^{-1} \left( \frac{\Lambda^{\frac{1}{1-t}} \left( \frac{1}{l}\right)}{F^*\circ \Lambda \left( \frac{1}{l}\right)}\right) }, \text{ for $l>0$ and $0<t<1$}. 
\end{align*}

Then the following assertions hold:
\begin{enumerate}[$(i)$]
    \item If $u\in W_{k,\text{loc}}^{1,\Lambda}$ is nonnegative $G$-invariant function and satisfies the DDI, then $\Phi(u)$ is locally integrable and there exists a constant $C>0$ such that 
    \begin{align*}
        \int_{B(x_0,l)\cap \{u>0 \}} b(x)\Phi(u)(x) w_k(x)\,dx \leq C \mathfrak{F}(l),
    \end{align*} for all $x_0\in \mathbb{R}^n$ and $l>0$. 
    \item If $\lim\limits_{l\to\infty} \mathfrak{F}(l)=0$, then there does not exists nonnegative, nonzero solution $u\in W_{k,\text{loc}}^{1,\Lambda}$ satisfying the DDI.
\end{enumerate}    
The rest of the paper is organized as follows: Section \ref{S:2} is devoted to reviewing the fundamentals of the Dunkl operator and some preliminary definitions and results for the main result. In Section \ref{S:3}, we define the Dunkl $A$-Laplacian and prove the Dunkl-Caccioppoli-type inequalities. Section \ref{S:4} concentrates on the nonexistence result associated with the DDI. Moreover, we modify the derived Dunkl-Caccioppoli-type inequality and establish a suitable function to control the nonexistence result of the DDI. 

\section{Preliminaries} \label{S:2}
In this section, we provide a brief overview of essential concepts and tools from Dunkl theory, as well as some fundamental definitions that will be used throughout this paper. For a more detailed study of Dunkl theory, see \cite{DJ1, D1, D2, D3, Rosler-2003} and references therein.

\subsection{Dunkl Operators}
Let $\mathbb{R}^n$ be the standard $n$-dimensional Euclidean space equipped with the inner product $\langle x, y \rangle_{\mathbb{R}^n} = \sum_{j=1}^n x(j)y(j)$ for some $x=(x(1), x(2),\cdots, x(n))$ and $y=(y(1), y(2), \cdots , y(n)) \in \mathbb{R}^n$. A finite set $\mathcal{R} \subset \mathbb{R}^n \setminus \{0\}$ is called a root system if it satisfies the following two properties:
\begin{itemize}
    \item For each $\alpha \in \mathcal{R}$, the set $\mathcal{R} \cap \mathbb{R}\alpha = \{\pm \alpha\}$,
    \item $\sigma_\alpha(\mathcal{R}) = \mathcal{R}$ for every $\alpha \in \mathcal{R}$, where $\sigma_\alpha(x)=x-2\frac{\langle x, \alpha \rangle_{\mathbb{R}^n}}{|\alpha|^2} \alpha$.
\end{itemize}
Throughout the paper, we will assume that  $|\alpha|^2 = 2$ for all $\alpha \in \mathcal{R}$. The root system can be decomposed into a disjoint union $\mathcal{R} = \mathcal{R}_+ \cup \mathcal{R}_-$, 
where $\mathcal{R}_+$ and $\mathcal{R}_-$ denote the sets of positive and negative roots, respectively. 
This decomposition is defined by choosing a vector $y \in \mathbb{R}^n$ such that $y \notin \alpha^\perp$ 
for every $\alpha \in \mathcal{R}$, and setting $\mathcal{R}_+ = \{ \alpha \in \mathcal{R} : \langle \alpha, y \rangle_{\mathbb{R}^n} > 0 \}, 
\qquad 
\mathcal{R}_- = -\mathcal{R}_+.$
The group $G$, generated by the reflections $\sigma_\alpha$ for $\alpha \in \mathcal{R}$, is called the reflection group (or Weyl group) associated with the root system $\mathcal{R}$. A function $k: \mathcal{R} \rightarrow \mathbb{C}$ is called a multiplicity function if it is $G$-invariant, i.e., $k(w\alpha) = k(\alpha)$ for all $w \in G$ and $\alpha \in \mathcal{R}$. We refer the readers to \cite{Hu, RKAN} for more details on the theory of root systems and reflection groups.\\
For a nonnegative multiplicity function $k$,  the weight function $w_k: \mathbb{R}^n \to \mathbb{R}$  associated with the Dunkl operator is
\begin{align*}
    w_k(x) = \prod_{\alpha \in \mathcal{R}_+} |\langle \alpha, x \rangle_{\mathbb{R}^n}|^{2k(\alpha)},
\end{align*}
which  is invariant under the group $G$ and homogeneous of degree $2\gamma$, where $\gamma = \sum\limits_{\alpha \in \mathcal{R}_+} k(\alpha)$.
Given a fixed positive subsystem $\mathcal{R}_+$ and a multiplicity function $k$, for $f \in \mathcal{C}^\prime(\mathbb{R}^n),$
the Dunkl operator \cite{D1} in the direction of a vector $\xi \in \mathbb{R}^n$ is defined as  
\begin{align} \label{Dunkl operator}
    \mathcal{T}_{\xi}f(x) = \partial_{\xi}f(x) + \sum_{\alpha \in \mathcal{R}_+} k(\alpha) \langle \alpha, \xi \rangle_{\mathbb{R}^n} \frac{f(x) - f(\sigma_\alpha(x))}{\langle \alpha, x \rangle_{\mathbb{R}^n}}.
\end{align}
When $k \equiv 0$, it reduces to the classical directional derivative in the direction of $\xi$. For notational convenience, we denote $ \mathcal{T}_{e_j}=\mathcal{T}_j $, where $\{e_1, \dots, e_n\}$ is the standard orthonormal basis of $\mathbb{R}^n$. Then, the associated gradient operator in the Dunkl framework is defined by
$$ \nabla_k =(\mathcal{T}_1, \mathcal{T}_2,\cdots,\mathcal{T}_n)$$
and the Dunkl Laplacian $\Delta_k$ \cite{D1} is given as the sum of the squares of the Dunkl operators:
$$\Delta_k = \sum_{j=1}^n \mathcal{T}_j^2.$$
It admits the representation
\begin{align*}
    \Delta_kf(x) = \Delta f(x) + 2\sum_{\alpha \in \mathcal{R}_+} k(\alpha) \left( \frac{\langle \nabla f(x), \alpha \rangle_{\mathbb{R}^n}}{\langle \alpha, x \rangle_{\mathbb{R}^n}} - \frac{f(x) - f(\sigma_\alpha(x))}{\langle \alpha, x \rangle^2_{\mathbb{R}^n}} \right),
\end{align*}
where $\Delta$ and $\nabla$ are the standard Laplacian and gradient on $\mathbb{R}^n$ respectively. Several properties of partial derivatives carry forward to Dunkl operators \cite{D2, D3}. The following lemma is a kind of chain rule for the Dunkl operator.
\begin{lemma}\label{Chain Rule}
    Let $\Psi \in \mathcal{C}^1(\mathbb{R})$ and $u \in \mathcal{C}'(\mathbb{R}^n)$ be invariant under the reflection group $G$. Consider the composition $\Psi \circ u$ defined by $\Psi \circ u(x) = \Psi(u(x))$.  
    Then, for each $j = 1, \dots, n$, we have
    \begin{align}
        \mathcal{T}_j  \Psi \circ u(x)  = \Psi'(u(x)) \, \mathcal{T}_j u(x). \label{chain rule-1}
    \end{align}
\end{lemma}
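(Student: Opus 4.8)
The plan is to exploit the $G$-invariance hypothesis to collapse the difference part of the Dunkl operator, after which the statement reduces to the classical chain rule for partial derivatives. The central observation is this: for any $\alpha \in \mathcal{R}_+$ and any $G$-invariant function $f$, we have $f(\sigma_\alpha(x)) = f(x)$ since $\sigma_\alpha \in G$, so the difference quotient $\frac{f(x) - f(\sigma_\alpha(x))}{\langle \alpha, x\rangle_{\mathbb{R}^n}}$ vanishes identically on the set where $\langle \alpha, x\rangle_{\mathbb{R}^n} \neq 0$. Consequently, for a $G$-invariant $f$, the entire reflection sum in \eqref{Dunkl operator} disappears and the Dunkl operator reduces to the ordinary directional derivative,
\[
\mathcal{T}_j f(x) = \partial_j f(x).
\]
I would record this reduction as the first step, applied to $f = u$, giving $\mathcal{T}_j u(x) = \partial_j u(x)$.

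Next, I would verify that the composition $\Psi \circ u$ is itself $G$-invariant. This is immediate: for any $w \in G$, $(\Psi \circ u)(w x) = \Psi(u(w x)) = \Psi(u(x)) = (\Psi \circ u)(x)$, using the $G$-invariance of $u$. Applying the same reduction as above to $f = \Psi \circ u$ yields
\[
\mathcal{T}_j (\Psi \circ u)(x) = \partial_j (\Psi \circ u)(x).
\]
At this point the classical chain rule for $\mathcal{C}^1$ functions applies directly: $\partial_j (\Psi \circ u)(x) = \Psi'(u(x))\,\partial_j u(x)$. Combining this with the two reductions gives
\[
\mathcal{T}_j (\Psi \circ u)(x) = \Psi'(u(x))\,\partial_j u(x) = \Psi'(u(x))\,\mathcal{T}_j u(x),
\]
which is precisely \eqref{chain rule-1}.

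There is essentially no deep obstacle here; the proof is a clean consequence of the fact that $G$-invariance annihilates the nonlocal term of the Dunkl operator. The only points requiring mild care are bookkeeping ones. First, I should confirm that $\Psi \circ u$ inherits enough regularity for the classical chain rule, which follows from $\Psi \in \mathcal{C}^1(\mathbb{R})$ and $u \in \mathcal{C}'(\mathbb{R}^n)$. Second, the vanishing of the difference quotient should be stated on $\{x : \langle \alpha, x\rangle_{\mathbb{R}^n} \neq 0\}$, with the understanding that the Dunkl operator is defined off the reflecting hyperplanes (or that the singularity is removable), so that no issue arises where $\langle \alpha, x\rangle_{\mathbb{R}^n} = 0$. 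With these remarks in place, the identity holds pointwise for every $j = 1, \dots, n$.
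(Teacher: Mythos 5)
Your argument is correct and follows essentially the same route as the paper: both proofs use the $G$-invariance of $u$ (hence of $\Psi\circ u$) to annihilate the difference part of the Dunkl operator in \eqref{Dunkl operator} and then invoke the classical chain rule for the remaining partial derivative. Your additional remarks on the $G$-invariance of $\Psi\circ u$, the identity $\mathcal{T}_j u=\partial_j u$, and the behaviour near the reflecting hyperplanes only make explicit what the paper leaves implicit.
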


\begin{proof}
    Using \eqref{Dunkl operator}, we obtain
    \begin{align*}
         \mathcal{T}_j\Psi \circ u(x) = \partial_j \Psi \circ u(x)  + \sum_{\alpha\in \mathcal{R}_+} k(\alpha) \alpha(j) \frac{\Psi \circ u(x) -\Psi \circ u(\sigma_\alpha(x))}{\langle \alpha, x \rangle_{\mathbb{R}^n}}.
    \end{align*} Since $u$ is $G$-invariant, we have $u(\sigma_\alpha(x)) = u(x)$ for all $\alpha \in \mathcal{R}_+$,  
    which implies that $ \mathcal{T}_j\Psi \circ u(x) = \Psi^\prime(u(x)) \mathcal{T}_ju(x).$  
\end{proof}
\begin{remark}
  A result similar to \eqref{chain rule-1} can be obtained by relaxing the $G$-invariance condition on $u$, provided that $\Psi$ is a linear function.
\end{remark}
Now, we list a few definitions for further reference. 
For $1 \leq p < \infty$, we define the weighted $L^p$-space:
\begin{align*}
    L_k^p(\mathbb{R}^n) = \left\{ f : \mathbb{R}^n \to \mathbb{C} \ \middle| \ \int_{\mathbb{R}^n} |f(x)|^p w_k(x) \, dx < \infty \right\},
\end{align*}
where $dx$ denotes the Lebesgue measure. For $p = \infty$, $ L_k^\infty(\mathbb{R}^n) = L^\infty(\mathbb{R}^n)$.

\begin{definition}
    Let $1\leq p \leq \infty$, $m \in \mathbb{N}$ and $a=(a_1, a_2, \cdots a_n)\in \mathbb{N}_0^n$. The Sobolev space associated with the Dunkl operator \cite{Velichu} is defined as
    \begin{align*}
        W_k^{m,p}(\mathbb{R}^n) = \{ f\in L_k^p(\mathbb{R}^n): \mathfrak{D}_k^a f \in L_k^p(\mathbb{R}^n),\text{ for all $a$ such that $ a_1+a_2+\cdots+a_n \leq m$ }  \},
    \end{align*} where $\mathfrak{D}_k^a f= \mathcal{T}_1^{a_1}\mathcal{T}_2^{a_2}\cdots\mathcal{T}_n^{a_n}f$. The local Dunkl-Sobolev space  $W_{k, \text{loc}}^{m,p}$ is given by  
    \begin{align*}
        W_{k, \text{loc}}^{m,p}(\mathbb{R}^n) = \{ f \in L_k^p(\mathbb{R}^n):  f \in W_{k}^{m,p}(\Omega), \text{ for all compact subset } \Omega \subset \mathbb{R}^n
        \}.
    \end{align*}
\end{definition}
 
\noindent For an arbitrary function $f \in W_{k,\mathrm{loc}}^{m,p}(\mathbb{R}^n)$,  
we define its point-wise value in the average sense by  
\begin{align} \label{point-wise}
    f(x) := \limsup_{r \to 0} \frac{1}{w_k(B(x,r))} 
    \int_{B(x,r)} f(y) \, w_k(y) \, dy,
\end{align}
where $B(x,r)$ denotes the ball of radius $r$ centered at $x$ and $w_k(B(x,r))= \int_{B(x,r)}w_k(x)\,dx$.
\begin{lemma}\cite{Kalamajska-1} \label{gradient-msr}
Let \( u \in W^{1,1}_{\mathrm{loc}}(\mathbb{R}^n) \). Then for each \( s \in \mathbb{R} \),
\[
\{ x \in \mathbb{R}^n : u(x) = s \} \subseteq \{ x \in \mathbb{R}^n : \nabla u(x) = 0 \} \cup N_s,
\]
where $N_s$ is a null set, and $u(x)$ is to be  understood in the sense of \eqref{point-wise}.
\end{lemma}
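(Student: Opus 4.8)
The plan is to reduce the claim to the almost-everywhere vanishing of the weak gradient on a level set, and then repackage that statement in terms of the precise representative. Since adding a constant to $u$ does not change its weak gradient, I would first set $v := u - s$, so that it suffices to prove $\nabla v = 0$ almost everywhere on $\{v = 0\}$. The null set in the statement will then be taken to be $N_s := \{x : u(x) = s\} \setminus \{x : \nabla u(x) = 0\}$, and the inclusion $\{u = s\} \subseteq \{\nabla u = 0\} \cup N_s$ will be automatic once $N_s$ is shown to have measure zero.

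The crux of the argument is the truncation (Stampacchia-type) chain rule: for $v \in W^{1,1}_{\mathrm{loc}}(\mathbb{R}^n)$ one has $\nabla v^{+} = \chi_{\{v>0\}}\,\nabla v$ and $\nabla v^{-} = -\,\chi_{\{v<0\}}\,\nabla v$ almost everywhere, where $v^{+} = \max(v,0)$ and $v^{-} = \max(-v,0)$. I would establish the first identity by approximating the Lipschitz map $t \mapsto t^{+}$ by a sequence $g_m \in \mathcal{C}^1(\mathbb{R})$ satisfying $g_m(t) = 0$ for $t \le 0$, $0 \le g_m' \le 1$, $g_m(t)\to t^{+}$ uniformly, and $g_m'(t)\to \chi_{(0,\infty)}(t)$ for every $t$. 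Applying the standard $\mathcal{C}^1$ chain rule for Sobolev functions gives $\nabla(g_m\circ v) = g_m'(v)\,\nabla v$, and the choice $g_m'(0)=0$ ensures that $g_m'(v)\,\nabla v \to \chi_{\{v>0\}}\,\nabla v$ pointwise a.e.\ (including on $\{v=0\}$, where both sides are $0$). Since $g_m\circ v \to v^{+}$ in $L^1_{\mathrm{loc}}$ and $|g_m'(v)\,\nabla v| \le |\nabla v| \in L^1_{\mathrm{loc}}$, dominated convergence identifies the weak gradient of $v^{+}$ as $\chi_{\{v>0\}}\,\nabla v$.

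With both truncation identities available, I would use $v = v^{+} - v^{-}$ and differentiate to obtain, almost everywhere,
\begin{align*}
\nabla v = \nabla v^{+} - \nabla v^{-} = \chi_{\{v>0\}}\,\nabla v + \chi_{\{v<0\}}\,\nabla v = \chi_{\{v\neq 0\}}\,\nabla v .
\end{align*}
Subtracting yields $\chi_{\{v=0\}}\,\nabla v = 0$ a.e., that is, $\nabla v = 0$ a.e.\ on $\{v=0\}$, which is exactly $\nabla u = 0$ a.e.\ on $\{u = s\}$. Hence $N_s$ as defined above is a null set.

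It remains to phrase the result in terms of the averaged pointwise value \eqref{point-wise}. Here I would simply observe that the precise representative coincides Lebesgue-a.e.\ with any other representative of $u$, so altering $u$ (and therefore the level set $\{u=s\}$) on a set of measure zero does not change the conclusion; the discrepancy is absorbed into $N_s$. The main obstacle I anticipate is the rigorous handling of the truncation chain rule at the non-differentiability point $t=0$: one must choose the mollified approximants $g_m$ with $g_m'(0)=0$ and $0\le g_m'\le 1$ so that the limiting identity for the weak gradient of $v^{+}$ holds without any circular appeal to the conclusion, and so that the passage to the limit is dominated on compact sets. This is the standard but delicate step underlying Gilbarg--Trudinger-type arguments, and the rest of the proof is a direct consequence of it.
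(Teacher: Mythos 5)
Your proof is correct: it is the classical Stampacchia/Gilbarg--Trudinger truncation argument (approximate $t\mapsto t^{+}$ by $C^{1}$ functions with $g_m'(0)=0$, identify $\nabla v^{+}=\chi_{\{v>0\}}\nabla v$ and $\nabla v^{-}=-\chi_{\{v<0\}}\nabla v$, subtract to get $\nabla v=0$ a.e.\ on $\{v=0\}$), together with the correct remark that replacing $u$ by its precise representative \eqref{point-wise} changes the level set only by a null set, which can be absorbed into $N_s$. The paper itself gives no proof of this lemma --- it is imported verbatim from \cite{Kalamajska-1} --- and your argument is precisely the standard justification underlying that citation, so there is nothing to reconcile beyond noting that your write-up is self-contained where the paper relies on the reference.
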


\begin{definition}
    A function $U:[0,\infty) \to [0,\infty)$ is an  \textbf{$N$-function} if it is convex and $\lim\limits_{s\to 0}\frac{U(s)}{s}=0$ and $\lim\limits_{s\to \infty}\frac{s}{U(s)}=0$.
\end{definition}

\begin{definition}
    A function $U:[0,\infty) \to [0,\infty)$ satisfies the \textbf{$M$-condition}, if there exists a constant $M_U>0$ such that for every $s_1,s_2\in [0,\infty)$  we have 
    \begin{align*}
        U(s_1s_2) \leq M_U U(s_1)U(s_2),
    \end{align*} where the constant $M_U$ is independent of the points $s_1$ and $s_2$.
\end{definition}
\begin{lemma} \label{N-function}\cite{Kalamajska-1} Let $U$ be an $N$-function, then the following holds
\begin{align*}
    \frac{U(s_1)}{s_1}s_2 \leq U(s_1)+U(s_2), \text{ for every $s_1,s_2>0.$}
\end{align*}
    
\end{lemma}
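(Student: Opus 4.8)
The plan is to reduce this Young-type inequality to a single monotonicity property of convex functions that vanish at the origin. First I would pin down that $U(0)=0$: the hypothesis $\lim_{s\to 0}U(s)/s=0$, combined with $U(s)=s\cdot\frac{U(s)}{s}$, forces $U(s)\to 0$ as $s\to 0$, and since a finite convex function is continuous on $[0,\infty)$ we may take $U(0)=0$. The conditions at infinity and the remaining $N$-function structure will not actually be needed; only convexity, non-negativity, and $U(0)=0$ enter.

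The key structural step is to show that the secant slope through the origin, $s\mapsto U(s)/s$, is non-decreasing on $(0,\infty)$. I would establish this by the standard three-chords argument: for $0<s<s'$ write $s=(1-\tfrac{s}{s'})\cdot 0+\tfrac{s}{s'}\cdot s'$ as a convex combination of $0$ and $s'$, so that convexity together with $U(0)=0$ gives $U(s)\le\tfrac{s}{s'}U(s')$, which rearranges to $\frac{U(s)}{s}\le\frac{U(s')}{s'}$. This is the only place where care is warranted, and even there the verification is routine.

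With the monotonicity in hand I would finish by a case split on the relative size of $s_1$ and $s_2$. If $s_2\ge s_1$, then $\frac{U(s_1)}{s_1}\le\frac{U(s_2)}{s_2}$, and multiplying by $s_2>0$ yields $\frac{U(s_1)}{s_1}s_2\le\frac{U(s_2)}{s_2}s_2=U(s_2)\le U(s_1)+U(s_2)$, using $U(s_1)\ge 0$. If instead $s_2<s_1$, then $0<\frac{s_2}{s_1}<1$, so $\frac{U(s_1)}{s_1}s_2=U(s_1)\frac{s_2}{s_1}\le U(s_1)\le U(s_1)+U(s_2)$, using $U\ge 0$. In both regimes the asserted bound holds for all $s_1,s_2>0$, completing the argument. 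The statement is genuinely elementary, so there is no substantial obstacle beyond correctly invoking the secant-slope monotonicity.
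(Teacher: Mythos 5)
Your proof is correct, and it is the standard argument: the paper itself does not prove this lemma but simply cites it from Ka{\l}amajska--Pietruska-Pa{\l}uba--Skrzypczak, so your write-up supplies exactly the elementary secant-slope reasoning that reference relies on (monotonicity of $s\mapsto U(s)/s$ from convexity and $U(0)=0$, then the two-case comparison of $s_1$ and $s_2$). One small technical caveat: a convex function on $[0,\infty)$ need not be continuous at the endpoint $0$, so ``finite convex functions are continuous on $[0,\infty)$'' is not quite right as stated; but since $\lim_{s\to 0^+}U(s)=\lim_{s\to 0^+} s\cdot\frac{U(s)}{s}=0$, you can either run the three-chords argument through a point $\varepsilon>0$ and let $\varepsilon\to 0$, or note that redefining $U(0):=0$ preserves convexity on $[0,\infty)$, and the inequality only involves positive arguments anyway. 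With that cosmetic repair the argument is complete.
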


\section{The Dunkl-\texorpdfstring{$A$}{A}-Laplacian and Caccioppoli-type inequality} \label{S:3}

In this section, we introduce the notion of $A$-Laplacian within the Dunkl framework and establish a corresponding Caccioppoli-type inequality. Various forms of Caccioppoli-type inequalities have been explored in the literature, including in abstract and generalized settings \cite{Ding,Niu}. Notably, in 2019, Chlebicka et al.\ \cite{Chlebicka} investigated nonlinear partial differential inequalities of the form
\begin{align*}
- \operatorname{div}(a(x)|\nabla u|^{p-2} \nabla u) \geq b(x)\Phi(u)\chi_{{u>0}}, \quad p>1,
\end{align*}
and derived Caccioppoli-type as well as Hardy-type inequalities for the solutions to weighted 
$p$-harmonic problems in the Euclidean setting. Motivated by their work, we extend the framework to the Dunkl setting by replacing the classical gradient with the Dunkl gradient, generalizing the $p$-Laplacian to the more flexible $A$-Laplacian operator, and restricting the function $a(x)$ to be $1$.
\par We define the Dunkl-Orlicz-Sobolev space  associated with an $N$-function  $U:[0,\infty)\to [0,\infty)$. 
\begin{definition} \label{DOS-space} Dunkl-Orlicz-Sobolev space $W_k^{1,U}(\mathbb{R}^n)$ is the completion of the space 
\begin{align*}
    \{ f\in \mathcal{C}^\infty(\mathbb{R}^n): \|f\|_{W_k^{1,U}(\mathbb{R}^n)}:= \|f\|_{L_{k,U}(\mathbb{R}^n)} + \|\nabla_k f\|_{L_{k,U}(\mathbb{R}^n)} < \infty\},
\end{align*} where 
\begin{align*}
    \|f\|_{L_{k,U}(\mathbb{R}^n)} = \inf \left\{ C>0: \int_{\mathbb{R}^n} U\left(\frac{|f(x)|}{C} \right)w_k(x)\,dx \leq 1 \right\} 
\end{align*} is the Dunkl-Orlicz norm.
\end{definition}
Conventionally, we assume that $\inf\emptyset = +\infty$. The local Dunkl-Orlicz-Sobolev space is denoted by  $W_{k, \text{loc}}^{1,U}(\mathbb{R}^n)$ and defined by 
\begin{align*}
 W_{k, \text{loc}}^{1,U}(\mathbb{R}^n)   =\{u\in \mathcal{C}^\infty(\mathbb{R}^n): u\phi\in W_k^{1,U}(\mathbb{R}^n) \text{ for all } \phi \in \mathcal{C}_c^\infty(\mathbb{R}^n)\}.
\end{align*}
The space $ L_{k, U}(\mathbb{R}^n)$  can be regarded as a weighted counterpart of the classical Orlicz space $L_{U}(\mathbb{R}^n)$, where the weight arises naturally from the Dunkl framework \cite{Guliyev}. Moreover, when the multiplicity function $ k \equiv 0$, the Dunkl-Orlicz-Sobolev space coincides with the classical Orlicz-Sobolev space. Further results on classical Orlicz-Sobolev spaces can be found in \cite{Donaldson}. The Orlicz–Sobolev space provides a natural framework for defining the $A$-Laplacian in the classical setting \cite{Kalamajska-1, Skrzypczak}. It is worth noting that Orlicz spaces are also used in the formulation of the $A$-Laplacian in the differential forms \cite{Niu}.


\par To this end, we introduce the Dunkl-$A$-Laplacian as  $\Delta_{k,A}(u)= \text{div}_k(A(\nabla_ku))$, under the assumptions that 
\begin{enumerate}[$(i)$]
    \item $A:\mathbb{R}^n\to\mathbb{R}^n$ is a function of the form $A(x)=B(|x|)x,$ where $B$ is a nonnegative continuous function on $[0,\infty)$.
    \item  For $s\in [0,\infty)$,  $\Lambda(s):=B(s)s^2$ is an $N$-function.
\end{enumerate}

Let $u\in W_{k, \text{loc}}^{1,\Lambda}(\mathbb{R}^n)$ and $v\in W_{k}^{1,\Lambda}(\mathbb{R}^n)$ be compactly supported nonnegative functions. The operator $\Delta_{k,A} $ is well defined by the formula
\begin{align*}
    \langle \Delta_{k,A}u,v\rangle & =  \langle\text{div}_k(A(\nabla_ku)), v \rangle = \langle \text{div}_k \left(B(|\nabla_ku|)\nabla_ku \right),v \rangle\\
    &=  -\int_{|\nabla_ku|\neq 0} B(|\nabla_ku|)\langle \nabla_ku, \nabla_kv \rangle_{\mathbb{R}^n} w_k(x)\,dx.
\end{align*}
The aforementioned identity follows from the duality property of the weighted Orlicz space and the Dunkl–Orlicz–Sobolev space. For more details on the duality of Orlicz spaces, see \cite{Harjulehto}.
The Dunkl-$A$-Laplacian serves as a unifying generalization that encompasses several differential operators as special cases, depending on the choice of the function $A$ and the parameter $k$. For example, when $A(x) = |x|^{p-2}x$, $p>1$, and $k\neq 0$, it coincides with the Dunkl-$p$-Laplacian, extending the classical, well-known nonlinear $p$-Laplacian to the Dunkl framework.  In the particular case where $A(x)=x$, i.e., $A$  is the identity map, the operator becomes the classical Dunkl Laplacian. Similarly, when the multiplicity function $k$ is identically zero and the choice of $A$ functions, the Dunkl-$A$-Laplacian reduces to the classical $p$-Laplacian and Laplacian.

\par Now we are in a position to define the Dunkl-differential inequality in the following way.
\begin{definition}
    Let $u\in  W_{k, \text{loc}}^{1,\Lambda}(\mathbb{R}^n)$ be nonnegative function,  
    $\Phi:(0,\infty) \to (0,\infty)$ and  $b:\mathbb{R}^n \to (0,\infty)$ are continuous functions satisfying $b\Phi(u)\chi_{\{u>0\}} \in L_{loc}^1(\mathbb{R}^n)$. The DDI reads 
    \begin{align*}
      -\Delta_{k,A}u \geq b\Phi(u)\chi_{\{u>0\}}.
    \end{align*}For every nonnegative compactly supported function $v\in W_{k}^{1,\Lambda}(\mathbb{R}^n), $ we have \begin{align} \label{W.S of Laplace}
         \langle - \Delta_{k,A}u,v\rangle& 
          \geq \int_{\{u>0\}}b(x)\Phi(u(x))v(x)w_k(x)\,dx. 
    \end{align}
\end{definition}
From now on, we say that $u$ satisfies the DDI if $u \in W_{k,\text{loc}}^{1,\Lambda}$ and the inequality \eqref{W.S of Laplace} holds for all nonnegative compactly supported functions $v \in W_{k,\text{loc}}^{1,\Lambda}$. We state our results in the sequel. The proof of the Dunkl-Caccioppoli-type inequality relies on techniques developed by Ka{\l}amajska et al. \cite{Kalamajska-1}. As a first step, we establish a local estimate of the Dunkl–Caccioppoli-type inequality.

\begin{proposition}\label{loc.est}(Local estimate) Assume that 
\begin{enumerate}[$(i)$]
    \item $\Lambda$ is a non-decreasing function which satisfies $M$-condition.
    \item $(\psi,g)$ is a pair of continuous functions $(\psi,g):(0,\infty)\times (0,\infty)\to (0,\infty)\times (0,\infty)$, where $\psi$ is  non-increasing and satisfies the following compatibility conditions:
     \begin{enumerate}[(a)] \item There exists a constant $C_\psi>1$ independent of $s,$ such that  \begin{align*}
     g(s)\psi^\prime(s) \leq -C_\psi\psi(s) \text{ \quad for a.e.  $s \in (0,\infty)$}.
    \end{align*}
    \item For $s\in (0,\infty)$, the function $\Theta(s)=\frac{\Lambda(g(s))}{g(s)}\psi(s)$
    is bounded on every interval  $(0,l]$ with $l>0$, and $ \zeta(s)= \frac{\psi(s)}{g(s)} $ is  non-increasing. 
     \end{enumerate}
\end{enumerate}Let $u$ be a $G$-invariant, nonnegative solution of the DDI. Then, for any $l>0$, the inequality 
\begin{align}
   & \int_{\{0<u<l\}} \left( b(x)\Phi(u(x)) + 
    \beta\frac{\Lambda(|\nabla_ku(x)|)}{g(u(x))}
    \right) \psi(u(x)) \phi(x)w_k(x)\,dx \notag \\
    & \leq  \int_{\{\nabla_ku \neq 0,0<u<l, \phi \neq0\}} \Lambda\left( \frac{|\nabla_k \phi(x)|}{\phi(x)}g(u(x)) \right) \frac{\psi(u(x))}{g(u(x))}\phi(x) w_k(x)\,dx + C_k(l) \label{local ineq}
\end{align} holds for every nonnegative Lipschitz function $\phi$ with compact support such that the integral $$\int_{\{\phi\neq0\}} \Lambda\left( \frac{|\nabla_k\phi(x)|}{\phi(x)}\right)\phi(x)w_k(x)\,dx < \infty,$$
 where 
 \begin{align}
     C_k(l) = \psi(l) \int_{\{ u> l\}} 
    \Bigl(
    B(|\nabla_ku(x)|) \langle \nabla_ku(x), \nabla_k\phi(x)\rangle_{\mathbb{R}^n}- b(x) \Phi(u(x))\phi(x)\Bigr)  w_k(x)\,dx 
 \label{C_k}
 \end{align}
 and $\beta=  C_\psi-1$.  
\end{proposition}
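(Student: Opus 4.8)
The plan is to test the weak form \eqref{W.S of Laplace} of the DDI against a truncated and shifted function of $u$, read off an energy inequality, and extract \eqref{local ineq} from it; this realizes the two-step scheme announced in the introduction, namely Lemma \ref{Lemma-1-main} (a $\delta$-regularized estimate) followed by Lemma \ref{Lemma-2-main} (the limit $\delta\searrow0$). Fix $\delta\in(0,l)$ and take the test function $v=\Psi_\delta(u)\,\phi$ with $\Psi_\delta(s)=\psi\big(\min(s,\,l-\delta)+\delta\big)$, so that $\Psi_\delta(u)=\psi(u+\delta)$ on $\{u<l-\delta\}$ and $\Psi_\delta(u)=\psi(l)$ on $\{u\ge l-\delta\}$. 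The shift by $\delta$ keeps $\Psi_\delta$ bounded and away from the singularity of $\psi$ at $0$, while the truncation at level $l-\delta$ creates the split between the good region $\{0<u<l-\delta\}$ and the remainder that becomes $C_k(l)$. Since $u$ is $G$-invariant, so is $\Psi_\delta(u)$, whence $v$ is an admissible nonnegative compactly supported test function.

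The $G$-invariance of $u$ is what makes the Dunkl calculus usable here. Because $\Psi_\delta(u)$ is $G$-invariant, the difference part of the Dunkl operator drops out of the Leibniz rule and one obtains the genuine product rule $\nabla_k v=\Psi_\delta'(u)\,\phi\,\nabla_k u+\Psi_\delta(u)\,\nabla_k\phi$; for the same reason $\nabla_k u=\nabla u$, so Lemma \ref{Chain Rule} extends to the Lipschitz $\Psi_\delta$ with $\Psi_\delta'(u)=\psi'(u+\delta)\,\chi_{\{u<l-\delta\}}$ almost everywhere, the level set $\{u=l-\delta\}$ being negligible for the gradient by Lemma \ref{gradient-msr}. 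Inserting this into \eqref{W.S of Laplace} and using $B(|\nabla_k u|)\,|\nabla_k u|^2=\Lambda(|\nabla_k u|)$ turns the left-hand side into a diagonal term $\int\Psi_\delta'(u)\,\Lambda(|\nabla_k u|)\,\phi\,w_k$ plus a mixed term $\int\Psi_\delta(u)\,B(|\nabla_k u|)\langle\nabla_k u,\nabla_k\phi\rangle\,w_k$, set against the source $\int_{\{u>0\}}b\,\Phi(u)\,\Psi_\delta(u)\,\phi\,w_k$.

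Next I would estimate both terms. On $\{0<u<l-\delta\}$ the compatibility condition $g\psi'\le-C_\psi\psi$ gives $-\Psi_\delta'(u)\ge C_\psi\,\psi(u+\delta)/g(u+\delta)$, placing a factor $C_\psi$ in front of $\int \frac{\psi(u+\delta)}{g(u+\delta)}\Lambda(|\nabla_k u|)\,\phi\,w_k$. For the mixed term, Cauchy--Schwarz together with $B(s)s=\Lambda(s)/s$ yields $B(|\nabla_k u|)\langle\nabla_k u,\nabla_k\phi\rangle\le \frac{\Lambda(|\nabla_k u|)}{|\nabla_k u|}|\nabla_k\phi|$, and Lemma \ref{N-function} with $s_1=|\nabla_k u|$, $s_2=\frac{|\nabla_k\phi|}{\phi}g(u+\delta)$ splits this bound into precisely the right-hand integrand of \eqref{intro-2} plus one further copy of $\frac{\psi(u+\delta)}{g(u+\delta)}\Lambda(|\nabla_k u|)\phi$. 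Subtracting that single copy from the $C_\psi$ copies leaves the coefficient $\beta=C_\psi-1$ on the good term; the contribution of the mixed term from $\{u>l-\delta\}$, where $\Psi_\delta(u)=\psi(l)$ is constant, assembles with the leftover source there into $C_k$ as in \eqref{C_k}. This establishes Lemma \ref{Lemma-1-main}. The passage $\delta\searrow0$ (Lemma \ref{Lemma-2-main}) then follows from the Lebesgue theorem: continuity gives $\psi(u+\delta)\to\psi(u)$ and $g(u+\delta)\to g(u)$, monotonicity of $\psi$ and of $\zeta=\psi/g$ supplies monotone convergence on the nonnegative left side, and boundedness of $\Theta$ on $(0,l]$ together with the $M$-condition on $\Lambda$ furnishes an integrable majorant on the right, delivering \eqref{local ineq}.

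I expect the main obstacle to be the reflection (difference) contributions carried by $\nabla_k\phi$, which have no classical analogue and must not spoil the $N$-function splitting. In the Euclidean argument the mixed term is automatically harmless on $\{\phi=0\}$ because $\nabla\phi=0$ there almost everywhere, but in the Dunkl setting $\nabla_k\phi$ retains the nonlocal terms $\sum_{\alpha\in\mathcal{R}_+}k(\alpha)\,\alpha\,\frac{\phi(x)-\phi(\sigma_\alpha x)}{\langle\alpha,x\rangle_{\mathbb{R}^n}}$, so controlling $\langle\nabla_k u,\nabla_k\phi\rangle$ on $\{\phi=0\}$ and on the level sets of $u$ is the delicate point. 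The available levers are the $G$-invariance of $u$ (forcing $\nabla_k u=\nabla u$), Lemma \ref{gradient-msr}, and the $G$-invariance and homogeneity of $w_k$, which together allow the reflection terms to be symmetrized or shown negligible; it is here, rather than in the otherwise classical convexity estimate, that the argument needs the most care.
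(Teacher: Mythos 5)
Your proposal is correct and follows essentially the same route as the paper: the same truncated-and-shifted test function $\psi(\min\{u+\delta,l\})\phi$, the chain rule for $G$-invariant $u$ (Lemma \ref{Chain Rule}), the compatibility condition to produce $C_\psi$, Lemma \ref{N-function} with $s_1=|\nabla_k u|$, $s_2=\frac{|\nabla_k\phi|}{\phi}g(u+\delta)$ yielding $\beta=C_\psi-1$, the region $\{u>l-\delta\}$ assembling into $C_k(l)$, and finally $\delta\searrow0$ by dominated convergence with Lemma \ref{gradient-msr} handling the level set. Your closing worry about the reflection part of $\nabla_k\phi$ on $\{\phi=0\}$ is a fair observation, but it corresponds to a step the paper also passes over implicitly when restricting the estimate of $I_3$ to $\{\phi\neq0\}$, so it does not mark a divergence from the paper's argument.
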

 We prove the inequality \eqref{local ineq} in  Lemma \ref{Lemma-2-main}. The following lemma proves the inequality \eqref{local ineq} with a $\delta$ factor.

 \begin{lemma} \label{Lemma-1-main}
     Let the functions $ \Lambda,u,b,g,\Phi, \psi$ and $\phi$  as in the Proposition \ref{loc.est}. Then for any $l>0$, we have the inequality
     \begin{align}
     &\int_{0<u\leq l-\delta} \left( b(x)\Phi(u(x)) + 
    \beta\frac{\Lambda(|\nabla_ku(x)|)}{g(u(x)+\delta)}
    \right) \psi(u(x)+\delta) \phi(x)w_k(x)\,dx \notag  \\
    & \leq   \int_{\{\nabla_ku \neq 0,0<u<l-\delta, \phi \neq0\}} \Lambda\left( \frac{|\nabla_k \phi(x)|}{\phi(x)}g(u(x)+\delta) \right) \frac{\psi(u(x)+\delta)}{g(u(x)+\delta)}\phi(x) w_k(x)\,dx \notag\\
    & +\psi(l) \int_{\{ u > l-\delta \}} \left(
B(|\nabla_k u|)\langle \nabla_k u, \nabla_k \phi \rangle 
- b(x)\Phi(u(x))\phi(x) \right) w_k(x)\,dx,\label{Lemma-1}
     \end{align}  for  $0<\delta<l$, where $\beta=C_\psi-1$.
 \end{lemma}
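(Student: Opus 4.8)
The plan is to use the test function $v = \psi(u+\delta)\,\phi$ in the weak formulation of the DDI, after localizing to the region where $u$ is bounded. Since the right-hand side of the local inequality \eqref{Lemma-1} carries a factor $\psi(l)$ on the set $\{u>l-\delta\}$, the natural truncation is to replace $\psi(u+\delta)$ by its value capped at level $l$; that is, I would work with the Lipschitz, bounded, nonincreasing function $s\mapsto \psi(\min(s,l-\delta)+\delta)$ applied to $u$, multiplied by $\phi$. This yields a legitimate nonnegative, compactly supported test function in $W_k^{1,\Lambda}$ (the chain rule of Lemma \ref{Chain Rule} applies because $u$ is $G$-invariant, so $\nabla_k[\psi(u+\delta)] = \psi'(u+\delta)\nabla_k u$ where $u<l-\delta$, and the gradient vanishes where $u>l-\delta$ up to the null set controlled by Lemma \ref{gradient-msr}). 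Plugging this $v$ into \eqref{W.S of Laplace} gives, on the left, $-\langle\Delta_{k,A}u,v\rangle = \int_{|\nabla_k u|\neq 0} B(|\nabla_k u|)\langle\nabla_k u,\nabla_k v\rangle_{\mathbb{R}^n}\,w_k\,dx$, and on the right, $\int_{\{u>0\}} b\Phi(u)\,v\,w_k\,dx$.

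Next I would expand $\nabla_k v = \phi\,\psi'(u+\delta)\nabla_k u + \psi(u+\delta)\nabla_k\phi$ on the truncation region $\{0<u<l-\delta\}$ and handle the cap region $\{u>l-\delta\}$ separately. On $\{0<u<l-\delta\}$ the product $\langle\nabla_k u,\nabla_k v\rangle$ produces two terms: a diagonal term $\phi\,\psi'(u+\delta)\,B(|\nabla_k u|)|\nabla_k u|^2 = \phi\,\psi'(u+\delta)\,\Lambda(|\nabla_k u|)$, and a cross term $\psi(u+\delta)\,B(|\nabla_k u|)\langle\nabla_k u,\nabla_k\phi\rangle$. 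The diagonal term is where the compatibility condition $(a)$, $g(s)\psi'(s)\leq -C_\psi\psi(s)$, enters: dividing by $g(u+\delta)$ converts $\phi\,\psi'(u+\delta)\Lambda(|\nabla_k u|)$ into a term bounded above by $-C_\psi\,\phi\,\frac{\psi(u+\delta)}{g(u+\delta)}\Lambda(|\nabla_k u|)$, which after moving to the left of the inequality contributes the coefficient $\beta = C_\psi-1$ (one factor of $C_\psi$ being split off to absorb the right-hand side $b\Phi(u)$ estimate). The contribution from $\{u>l-\delta\}$ is exactly the boundary term $\psi(l)\int_{\{u>l-\delta\}}\bigl(B(|\nabla_k u|)\langle\nabla_k u,\nabla_k\phi\rangle - b\Phi(u)\phi\bigr)w_k\,dx$ appearing in \eqref{Lemma-1}, since on this set the capped function equals the constant $\psi(l)$ so its gradient sees only $\nabla_k\phi$.

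The cross term must then be estimated by Young's inequality for $N$-functions, Lemma \ref{N-function}, to produce the $\Lambda\bigl(\frac{|\nabla_k\phi|}{\phi}g(u+\delta)\bigr)\frac{\psi(u+\delta)}{g(u+\delta)}\phi$ integrand on the right-hand side of \eqref{Lemma-1}. Concretely, I would write the cross term as $\psi(u+\delta)\,B(|\nabla_k u|)|\nabla_k u|\,\frac{|\nabla_k u|}{|\nabla_k u|}\langle\cdots\rangle$ and rescale by $g(u+\delta)$ so that one factor is of the form $\frac{\Lambda(|\nabla_k u|)}{g(u+\delta)}$ (absorbed into the diagonal $\beta$-term) and the conjugate factor involves $\Lambda\bigl(\frac{|\nabla_k\phi|}{\phi}g(u+\delta)\bigr)$; the boundedness of $\Theta$ and monotonicity of $\zeta$ in condition $(b)$ keep these manipulations finite and sign-correct. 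I expect the main obstacle to be the careful bookkeeping on the truncation boundary $\{u=l-\delta\}$: one must verify that the capped test function is genuinely in the admissible class and that no boundary contribution is lost when splitting the domain, which is precisely where Lemma \ref{gradient-msr} is needed to ensure $\nabla_k u = 0$ a.e.\ on the level set $\{u=l-\delta\}$ so that the two regions glue without an extra interface term. A secondary subtlety is ensuring the Young-type splitting allocates exactly a factor $C_\psi-1$ rather than $C_\psi$ to the gradient term, so that the constant $\beta$ comes out correctly.
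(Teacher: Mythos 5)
Your proposal is correct and follows essentially the same route as the paper: testing the weak formulation \eqref{W.S of Laplace} with $Q=\psi(\min\{u+\delta,l\})\,\phi$, splitting into the diagonal term (handled via Lemma \ref{Chain Rule} and the compatibility condition $g\psi'\leq -C_\psi\psi$), the cross term (handled via Lemma \ref{N-function} after rescaling by $g(u+\delta)$), and the cap-region term carrying the factor $\psi(l)$, which combine to give $\beta=C_\psi-1$ exactly as in the paper's argument.
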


\begin{proof}
    Let $l>0$ be fixed. Define $u_{\delta,l}(x)=\min\{ u(x)+\delta,l\}$ and $Q(x)=\psi(u_{\delta,l}(x))\phi(x)$, where $0<\delta<l$. We observe that $Q\in W_{k,\text{loc}}^{1,\Lambda}$ and 
    \begin{align}
        I:&=\int_{\{u>0\}}b(x)\Phi(u(x))Q(x)w_k(x)\,dx \notag \\
          & = I_0+  I_1, \label{ineq-1}
    \end{align}
where 
\begin{align*}
    I_0 & = \int_{\{0<u<l-\delta\}} b(x)\Phi(u(x))\psi(u(x)+\delta)\phi(x)w_k(x)\,dx \\
    I_1 & =\psi(l)\int_{\{u>l-\delta\}} b(x)\Phi(u(x))\phi(x)w_k(x)\,dx. \hspace{2.3cm}
\end{align*} Using the inequality \eqref{W.S of Laplace} we have the estimates
\begin{align}
     & I \leq  \langle - \Delta_{k,A}u,Q\rangle 
         =  \int_{\{\nabla_ku \neq 0\} } B(|\nabla_ku(x)|)\langle \nabla_ku(x), \nabla_kQ(x) \rangle_{\mathbb{R}^n} w_k(x)\,dx \notag\\
      &  = \int_{\{\nabla_ku \neq 0,0<u<l-\delta\}} B(|\nabla_ku(x)|)\langle \nabla_ku(x), \phi(x)\nabla_k\psi(u(x)+\delta)+\psi(u(x)+\delta)\nabla_k \phi(x)\rangle_{\mathbb{R}^n} w_k(x)\,dx \notag \\
      & +  \psi(l)\int_{\{\nabla_ku \neq 0,u>l-\delta\}} B(|\nabla_ku|(x))\langle \nabla_ku(x),\nabla_k \phi(x)\rangle_{\mathbb{R}^n} w_k(x)\,dx \notag
      \\
      &=   I_2 + I_3+I_4, \label{ineq-2}
\end{align} where 
\begin{align*}
    I_2 &=  \int_{\{\nabla_ku \neq 0,0<u<l-\delta\}} B(|\nabla_ku(x)|)\langle \nabla_ku(x), \nabla_k\psi(u(x)+\delta)\rangle_{\mathbb{R}^n} \phi(x) w_k(x)\,dx \\
    I_3 & =  \int_{\{\nabla_ku \neq 0,0<u<l-\delta\}} B(|\nabla_ku(x)|)\langle \nabla_ku(x),   \nabla_k \phi(x) \rangle_{\mathbb{R}^n} \psi(u(x)+\delta) w_k(x)\,dx\\
    I_4 &=  \psi(l)\int_{\{\nabla_ku \neq 0,u>l-\delta\}} B(|\nabla_ku(x)|)\langle \nabla_ku(x),\nabla_k \phi(x)\rangle_{\mathbb{R}^n} w_k(x)\,dx.
\end{align*}
Now consider the integral $I_2$. Using Lemma \ref{Chain Rule} and the compatibility condition of $(\psi,g)$ defined in Proposition \ref{loc.est}, we see that  
\begin{align}
     I_2 &=  \int_{\{\nabla_ku \neq 0,0<u<l-\delta\}} B(|\nabla_ku(x)|)\langle \nabla_ku(x), \nabla_k\psi(u(x)+\delta)\rangle_{\mathbb{R}^n} \phi(x) w_k(x)\,dx \notag \\
     & =  \int_{\{\nabla_ku \neq 0,0<u<l-\delta\}} \Lambda(|\nabla_ku(x)|)\psi^\prime(u(x)+\delta)\phi(x) w_k(x)\,dx \notag\\
     & 
     \leq -C_\psi  \int_{\{\nabla_ku \neq 0,0<u<l-\delta\}} \Lambda(|\nabla_ku(x)|) \frac{\psi(u(x)+\delta)}{g(u(x)+\delta)} \phi(x) w_k(x)\,dx \notag \\
     & = -C_\psi I_5,  \label{ineq-3}
\end{align} where 
\begin{align*}
    I_5 = \int_{\{\nabla_ku \neq 0,0<u<l-\delta\}} \Lambda(|\nabla_ku(x)|) \frac{\psi(u(x)+\delta)}{g(u(x)+\delta)} \phi(x) w_k(x)\,dx
\end{align*}
and $C_\psi$ is the constant in the compatibility condition of $(\psi,g)$ defined in Proposition \ref{loc.est}. Applying the monotonicity of the integration on $I_3$, we observe that
\begin{align}
    I_3 & \leq \int_{\{\nabla_ku \neq 0,0<u<l-\delta\}} B(|\nabla_ku(x)|) |\nabla_ku(x)|   |\nabla_k \phi(x)|\psi(u(x)+\delta) w_k(x)\,dx \notag \\
    & = \int_{\{\nabla_ku \neq 0,0<u<l-\delta, \phi \neq0\}}\frac{\Lambda(|\nabla_ku(x)|)}{|\nabla_ku(x)|}   \left(\frac{|\nabla_k \phi(x)|}{\phi(x)}g(u(x)+\delta)\right)\frac{\psi(u(x)+\delta)}{g(u(x)+\delta)}\phi(x) w_k(x)\,dx. \label{I3}
\end{align}
Since $\Lambda$ is an $N$-function, we can apply  Lemma \ref{N-function} to $\Lambda$ with the parameters $s_1= |\nabla_ku(x)|$ and $s_2 = \frac{|\nabla_k \phi(x)|}{\phi(x)}g(u(x)+\delta) $ in \eqref{I3} and yield  
\begin{align}
    I_3 \leq  I_5+ I_6, \label{ineq-4}
\end{align}
where 
\begin{align*}
    I_6  & = \int_{\{\nabla_ku \neq 0,0<u<l-\delta, \phi \neq0\}} \Lambda\left( \frac{|\nabla_k \phi(x)|}{\phi(x)}g(u(x)+\delta) \right) \frac{\psi(u(x)+\delta)}{g(u(x)+\delta)}\phi(x) w_k(x)\,dx.
\end{align*}
Combining the estimates \eqref{ineq-1}, \eqref{ineq-2},  \eqref{ineq-3}, and  \eqref{ineq-4} we get
\begin{align*}
    I  = I_0+ I_1 & \leq  I_2+ I_3+I_4 \leq -C_\psi I_5+ I_5+I_6+I_4.
\end{align*}
Eventually, \begin{align*}
     I_0 + (C_\psi-1) I_5 & \leq I_6+I_4-I_1,
\end{align*}
which gives the required inequality.
\end{proof}

\begin{lemma} \label{Lemma-2-main}
   Under the assumptions of Proposition \ref{loc.est}, letting $\delta \searrow 0$ in \eqref{Lemma-1} yields the following inequality:
    \begin{align}
  &\int_{0<u\leq l} \left( b(x)\Phi(u(x)) + 
    \beta\frac{\Lambda(|\nabla_ku(x)|)}{g(u(x))}
    \right) \psi(u(x)) \phi(x)w_k(x)\,dx \notag  \\
  & \leq   \int_{\{\nabla_ku \neq 0,0<u<l, \phi \neq0\}} \Lambda\left( \frac{|\nabla_k \phi(x)|}{\phi(x)}g(u(x)) \right) \frac{\psi(u(x))}{g(u(x))}\phi(x) w_k(x)\,dx + C_k(l)\label{Lemma2} 
    \end{align}
for $l>0$.  $C_k(l)$ is given in \eqref{C_k}.
\end{lemma}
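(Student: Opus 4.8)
The plan is to obtain inequality \eqref{Lemma2} by passing to the limit $\delta \searrow 0$ in the $\delta$-dependent inequality \eqref{Lemma-1} established in Lemma \ref{Lemma-1-main}, treating each of the several integral terms separately and invoking the Lebesgue dominated convergence theorem where applicable. First I would record the pointwise convergences as $\delta \searrow 0$: since $\psi$ and $g$ are continuous on $(0,\infty)$, we have $\psi(u(x)+\delta)\to \psi(u(x))$ and $g(u(x)+\delta)\to g(u(x))$ for every $x$ with $u(x)>0$, hence the integrands on the left-hand side and in the $I_6$-type term on the right converge pointwise. The term $C_k(l)$ (the $I_4-I_1$ part over $\{u>l-\delta\}$) converges to its value over $\{u>l\}$; here I would note that by Lemma \ref{gradient-msr} the set $\{u=l\}$ contributes nothing, since on it either $\nabla_k u = 0$ (making $B(|\nabla_k u|)\langle \nabla_k u,\nabla_k\phi\rangle$ vanish) or the set is null, so the boundary of integration may be shifted from $l-\delta$ to $l$ without affecting the limit.

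Next I would handle the domains of integration, which also depend on $\delta$. On the left-hand side the region $\{0<u\le l-\delta\}$ increases to $\{0<u<l\}$, and combined with the $\{u=l\}$-negligibility this matches the target domain $\{0<u\le l\}$ up to a null set. On the right-hand side the domain $\{0<u<l-\delta\}$ likewise increases to $\{0<u<l\}$. To push the limit through the integral signs I would produce $\delta$-uniform integrable dominating functions. For the left-hand side, monotonicity is the cleanest tool: since $\psi$ is non-increasing we have $\psi(u+\delta)\le \psi(u)$, and since $\zeta=\psi/g$ is non-increasing (condition (b) of Proposition \ref{loc.est}) we have $\Lambda(|\nabla_k u|)\psi(u+\delta)/g(u+\delta)\le \Lambda(|\nabla_k u|)\psi(u)/g(u)$, so each left-hand integrand is dominated by its $\delta=0$ value; that value is integrable precisely because it is one of the quantities the inequality asserts to be finite, so I would instead argue by Fatou's lemma on the left (the left-hand side is a liminf of smaller quantities, giving the correct direction) to avoid circularity.

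For the right-hand side I would use dominated convergence, bounding $\Lambda\!\left(\frac{|\nabla_k\phi|}{\phi}g(u+\delta)\right)$ uniformly in $\delta$. The key estimate is furnished by the $M$-condition on $\Lambda$: writing $\Lambda\!\left(\frac{|\nabla_k\phi|}{\phi}g(u+\delta)\right)\le M_\Lambda \Lambda\!\left(\frac{|\nabla_k\phi|}{\phi}\right)\Lambda(g(u+\delta))$, and then controlling the product $\Lambda(g(u+\delta))\frac{\psi(u+\delta)}{g(u+\delta)} = \frac{\Lambda(g(u+\delta))}{g(u+\delta)}\psi(u+\delta)=\Theta(u+\delta)$, which by condition (b) is bounded on $(0,l]$ by some constant depending only on $l$. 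This reduces the right-hand integrand to $M_\Lambda\,\Theta(u+\delta)\,\Lambda\!\left(\frac{|\nabla_k\phi|}{\phi}\right)\phi$, dominated uniformly in $\delta\in(0,l)$ by $M_\Lambda \sup_{(0,l]}|\Theta|\cdot \Lambda\!\left(\frac{|\nabla_k\phi|}{\phi}\right)\phi$, whose integral over $\{\phi\neq 0,\ \nabla_k u\neq 0\}$ is finite by the hypothesis on $\phi$. I expect the main obstacle to be exactly this domination on the right, namely keeping the combination $\Lambda(g(u+\delta))\psi(u+\delta)/g(u+\delta)$ under control uniformly as $\delta\searrow 0$ near the region where $u$ is small; the boundedness of $\Theta$ on $(0,l]$ and the multiplicative splitting from the $M$-condition are precisely the hypotheses engineered to resolve it, so the argument is to combine them as above and then invoke Lebesgue dominated convergence to conclude \eqref{Lemma2}.
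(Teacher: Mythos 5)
Your overall strategy is the same as the paper's: pass to the limit $\delta\searrow 0$ in \eqref{Lemma-1}, use continuity of $\psi,g$ for the pointwise limits, handle the level set $\{u=l\}$ via Lemma \ref{gradient-msr} together with $G$-invariance, and dominate the right-hand integrand by splitting with the $M$-condition and using the boundedness of $\Theta(s)=\frac{\Lambda(g(s))}{g(s)}\psi(s)$ on $(0,l]$ against the hypothesis $\int_{\{\phi\neq0\}}\Lambda\bigl(\frac{|\nabla_k\phi|}{\phi}\bigr)\phi\,w_k\,dx<\infty$. (The paper instead applies dominated convergence to the single combined function $H(\delta,l)=H_1+H_2-H_3$ and must also dominate the left-hand integrand $H_3$; your Fatou argument on the left, using nonnegativity of the integrand since $\beta=C_\psi-1>0$, is a legitimate and in fact slightly cleaner variant on that point.)

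There is, however, a genuine flaw in how you dispose of the level set $\{u=l\}$. Lemma \ref{gradient-msr} does \emph{not} say that $\{u=l\}$ is null; it says $\{u=l\}\subseteq\{\nabla u=0\}\cup N_l$, so only the part where the gradient is nonzero is negligible, and $u$ may well have a plateau $\{u=l\}$ of positive measure. Consequently: (a) in the $C_k$-type term, shifting the domain from $\{u>l-\delta\}$ to $\{u>l\}$ is not harmless, because although $B(|\nabla_ku|)\langle\nabla_ku,\nabla_k\phi\rangle_{\mathbb{R}^n}$ vanishes a.e.\ on $\{u=l\}$, the term $-b(x)\Phi(u)\phi$ does not, so the limit of that term is $C_k(l)-\psi(l)\int_{\{u=l\}}b(x)\Phi(l)\phi(x)w_k(x)\,dx$, not $C_k(l)$; and (b) the limit of the left-hand domain $\{0<u\le l-\delta\}$ is $\{0<u<l\}$, which does \emph{not} coincide with the target domain $\{0<u\le l\}$ up to a null set. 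Your argument as written therefore fails whenever $\{u=l\}$ has positive measure. The statement is still true, because the two discrepancies cancel exactly: a.e.\ on $\{u=l\}$ one has $\nabla_ku=0$, hence $\Lambda(|\nabla_ku|)=0$ there, and the left-hand contribution of $\{u=l\}$ reduces to $\psi(l)\int_{\{u=l\}}b(x)\Phi(l)\phi(x)w_k(x)\,dx$, which is precisely the deficit in (a); moving it across the inequality upgrades the left-hand domain from $\{0<u<l\}$ to $\{0<u\le l\}$ while restoring $C_k(l)$ on the right. This bookkeeping (which the paper's combined-$H$ formulation performs implicitly) must be added to make your proof complete.
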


\begin{proof} Instead of proving the inequality \eqref{Lemma2}, we shall prove 
\begin{align*}
    &\int_{\{\nabla_ku \neq 0,0<u<l, \phi \neq0\}} \Lambda\left( \frac{|\nabla_k \phi(x)|}{\phi(x)}g(u(x)) \right) \frac{\psi(u(x))}{g(u(x))}\phi(x) w_k(x)\,dx  + C_k(l) \\
    & - \int_{0<u\leq l} \left( b(x)\Phi(u(x)) + 
    \beta\frac{\Lambda(|\nabla_ku(x)|)}{g(u(x))}
    \right) \psi(u(x)) \phi(x)w_k(x)\,dx \geq 0.
\end{align*}
For each $\delta\in (0,l)$, in view of \eqref{Lemma-1}, we define the function 
\begin{align*}
    H(\delta, l)(x) = H_1(\delta,l)(x)+ H_2(\delta,l)(x)-H_3(\delta,l)(x),  
\end{align*}
where 
\begin{align*}
    H_1(\delta,l)(x)=&\Lambda\left( \frac{|\nabla_k \phi(x)|}{\phi(x)}g(u(x)+\delta) \right) \frac{\psi(u(x)+\delta)}{g(u(x)+\delta)}\phi(x) \chi_{\{\nabla_ku \neq 0,0<u<l-\delta, \phi \neq0\}} \\
     H_2(\delta,l)(x)=&\psi(l)  \Bigl(
    B(|\nabla_ku(x)|) \langle \nabla_ku(x), \nabla_k\phi(x)\rangle_{\mathbb{R}^n}- b(x) \Phi(u(x))\phi(x)\Bigr) \chi_{\{ u> l-\delta\} }
     \\
      H_3(\delta,l)(x)=& \left( b(x)\Phi(u(x)) + 
    \beta\frac{\Lambda(|\nabla_ku(x)|)}{g(u(x)+\delta)}
    \right) \psi(u(x)+\delta) \phi(x) \chi_{\{0<u\leq l-\delta \}}.
\end{align*}
Let $(\delta_n)$ be a sequence of positive real numbers such that $\delta_n \searrow 0$.  Using the $G$-invariant property of $u$ and Lemma \ref{gradient-msr}, we obtain that the Lebesgue measure of the set
$$ \{ x \in \mathbb{R}^n : \nabla_k u(x) \neq 0,\, u(x) = l \} $$
is zero. This ensures that  $H(\delta_n, l)(x) \to H(0, l)(x) $ a.e. as  $\delta_n \searrow 0$.
The inequality \eqref{Lemma-1} immediately implies that for every $\delta _n$,
$$\int_{\mathbb{R}^n} H(\delta_n, l)(x) \, w_k(x) \, dx \geq 0.$$
Applying the dominated convergence theorem, we obtain
$$\int_{\mathbb{R}^n} H(0, l)(x) \, w_k(x) \, dx \geq 0,$$
which yields the required estimate.
Finally, the proof is completed by showing the existence of an integrable function $ H(l)$  such that $|H(\delta_n, l)(x)| \leq H(l)(x)\, a.e.$\\
We have the following observations: Using the $M$-condition of $\Lambda$, we deduce 
    \begin{align}
        H_1(\delta_n,l)(x) \leq & M_{\Lambda} \Lambda\left( \frac{|\nabla_k \phi(x)|}{\phi(x)} \right) \frac{\Lambda(g(u(x)+\delta_n))}{g(u(x)+\delta_n)}
        {\psi(u(x)+\delta_n)}\phi(x) \chi_{\{\nabla_ku \neq 0,0<u<l-\delta_n, \phi \neq0\}} \notag\\
         \leq & M_{\Lambda} C(\Lambda, g,\psi) \Lambda\left( \frac{|\nabla_k \phi(x)|}{\phi(x)} \right) \phi(x) \chi_{\{\nabla_ku \neq 0,0<u<l, \phi \neq0\}}\notag\\
         =: & H_1(l)(x), \label{H_1}
    \end{align} where $ C(\Lambda, g,\psi)=\sup_{s \in [0,l)} \frac{\Lambda(g(s))}{g(s)}{\psi(s)}$. \\
For $\delta_n < \frac{l}{2}$, the bound for $H_2(\delta_n,l)$ is independent of $\delta_n$: 
\begin{align}
     | H_2(\delta_n,l)(x)|\leq&  \psi(l)  \Bigl(
    B(|\nabla_ku(x)|) |\nabla_ku(x)| |\nabla_k\phi(x)|+ b(x) \Phi(u(x))\phi(x)\Bigr) \chi_{\{ u> \frac{l}{2}\} } \notag\\
     =: &H_2(l)(x). \label{H2}
\end{align}
Recalling the non-increasing nature of $\psi$ and $\zeta$ functions, we obtain the following bound for $H_3(\delta_n,l):$
\begin{align}
    |H_3(\delta_n,l)(x)|  & \leq  \Bigl( b(x)\Phi(u(x))\psi(u(x)) + 
   | \beta|{\Lambda(|\nabla_ku(x)|)}{\zeta(u(x)}
    \Bigl)  \phi(x) \chi_{\{0<u\leq l \}}  \notag\\
    &=: H_3(l)(x)\label{H3}.
\end{align}
In view of \eqref{H_1}, \eqref{H2}, and \eqref{H3}, we have 
\begin{align*}
    |H(\delta_n,l)(x)| \leq H_1(l)(x)+H_2(l)(x)+H_3(l)(x)= H(l)(x). 
\end{align*} 
The integrability of $H(l)$ will follow from the assumptions of the functions.
\end{proof}
\begin{remark}
     The local estimate for the Dunkl–Caccioppoli-type inequality was derived for the class of $G$-invariance functions. For this class of functions, we have the identity $\mathcal{T}_j u(x)= \partial _j u(x)$ for $j=1, 2, \cdots, n$. In the proof of this result, a crucial step is taking the limit $\delta \searrow 0$ in the inequality \eqref{Lemma-1}. The convergence follows from Lemma \ref{gradient-msr}, which ensures that the set \( \{ x : u(x) = l,\, \nabla_k u \neq 0 \} \) has measure zero, where $u$ is $G$-invariant. However, for a general function $f$, such a lemma does not always hold in the Dunkl setting due to the reflection component of the Dunkl gradient. For instance, consider the function
\[
f(x) = 
\begin{cases}
0, & x \leq 0, \\
3x^2 - 2x^3, & 0 < x < 1, \\
1, & x \geq 1.
\end{cases}
\]
Then \( f(x) = 1 \) for \( x \geq 1 \), but the Dunkl operator yields \( \mathcal{T}f(x) = \frac{k}{x} \neq 0 \) if \( k \neq 0 \). So, for a general function, we may not guarantee zero measure for the set \( \{ x\in \mathbb{R}^n: u(x)=l \text{ and } \nabla_ku \neq 0\} \). To resolve this issue, we restrict ourselves to \( G \)-invariant functions \( u \), which eliminate the reflection part and ensure the necessary regularity.
\end{remark}

Now we are stating the global estimate of Dunkl-Caccioppoli-type inequality, which is the limiting case of the local estimate \eqref{Lemma2}.  To the best of our knowledge, the Caccioppoli-type inequalities are not derived for the partial differential inequality $-\Delta_{A}(u) \geq b\Phi(u)\chi_{\{u>0\}}$, which is a special case of $k=0$.
\begin{theorem}\label{thrm-1}(Dunkl Caccioppoli-type estimate):
Let \( u \in W_k^{1,\Lambda}(\mathbb{R}^n) \) be a $G$-invariant,  nonnegative solution to the DDI,
\[
-\Delta_{k,A}u \geq b(x)\Phi(u)\chi_{\{u>0\}}.
\]
 Then $u$ satisfies the Dunkl-Caccioppoli-type inequality
\begin{align*}
\int_{\{u>0\}} &\left( b(x)\Phi(u(x)) + 
\beta\frac{\Lambda(|\nabla_k u(x)|)}{g(u(x))} \right) 
\psi(u(x)) \phi(x) w_k(x)\,dx \\
&\leq  \int_{\{\nabla_k u \neq 0,\ u>0,\ \phi \neq 0\}} \Lambda\left( \frac{|\nabla_k \phi(x)|}{\phi(x)}g(u(x)) \right) \frac{\psi(u(x))}{g(u(x))}\phi(x) w_k(x)\,dx,
\end{align*}
for every nonnegative, compactly supported Lipschitz function \( \phi \) such that
\[
\int_{\{\phi \neq 0,\ \nabla_k u \neq 0\}} \Lambda\left( \frac{|\nabla_k \phi(x)|}{\phi(x)} \right) \phi(x) w_k(x)\,dx < \infty,
\]
where $\beta = C_\psi-1$, and the functions $ \Lambda,u,b,g,\Phi, \psi$, and $\phi$ satisfy the assumptions of Proposition \ref{loc.est}.  
\end{theorem}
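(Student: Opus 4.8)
The plan is to obtain Theorem~\ref{thrm-1} as the limit $l\to\infty$ of the local estimate established in Proposition~\ref{loc.est}. The entire difficulty is therefore concentrated in showing that the correction term $C_k(l)$ from \eqref{C_k} vanishes in the limit, while the remaining integrals converge to their global counterparts by monotone convergence. First I would record that, since $\phi$ is fixed with compact support and $u\in W_k^{1,\Lambda}(\mathbb{R}^n)$ (the \emph{global} space, not merely the local one), the hypothesis
\[
\int_{\{\phi\neq 0,\ \nabla_k u\neq 0\}}\Lambda\!\left(\frac{|\nabla_k\phi(x)|}{\phi(x)}\right)\phi(x)\,w_k(x)\,dx<\infty
\]
together with the $M$-condition on $\Lambda$ and the boundedness of $\Theta(s)=\frac{\Lambda(g(s))}{g(s)}\psi(s)$ guarantees that the right-hand side integrand of \eqref{Lemma2} is dominated by an integrable function independent of $l$, exactly as in the bound \eqref{H_1} for $H_1(l)$. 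This is the uniform control that lets me pass to the limit.

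**Passing to the limit.**
With the local inequality \eqref{Lemma2} in hand, I would let $l\to\infty$ along a sequence $l_m\nearrow\infty$ and treat the three groups of terms separately. On the left-hand side the integrand is nonnegative (note $\beta=C_\psi-1>0$), supported on $\{0<u\le l\}$, and increases monotonically to the integrand over $\{u>0\}$ as $l\to\infty$; the monotone convergence theorem then gives the desired left-hand side. On the right-hand side the principal integral is taken over the increasing family of sets $\{\nabla_k u\neq 0,\ 0<u<l,\ \phi\neq 0\}$, whose integrand is again nonnegative and dominated by the fixed integrable majorant described above, so dominated (equivalently monotone) convergence yields the global integral over $\{\nabla_k u\neq 0,\ u>0,\ \phi\neq 0\}$.

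**The main obstacle: $C_k(l)\to 0$.**
The crux is to prove $\lim_{l\to\infty} C_k(l)=0$, where
\[
C_k(l)=\psi(l)\int_{\{u>l\}}\Bigl(B(|\nabla_k u|)\langle\nabla_k u,\nabla_k\phi\rangle_{\mathbb{R}^n}-b(x)\Phi(u)\phi(x)\Bigr)w_k(x)\,dx.
\]
I expect this to be the hardest step. Since $\phi$ has compact support, say $\operatorname{supp}\phi\subset K$, the integration set is effectively $K\cap\{u>l\}$, whose $w_k$-measure tends to $0$ as $l\to\infty$ because $u\in L_k^{\Lambda}$ forces $w_k(\{u>l\}\cap K)\to 0$. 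For the term containing $b\Phi(u)\phi$, local integrability of $b\Phi(u)\chi_{\{u>0\}}$ and absolute continuity of the integral give that $\int_{K\cap\{u>l\}}b\Phi(u)\phi\,w_k\,dx\to 0$; multiplying by the bounded factor $\psi(l)$ (recall $\psi$ is non-increasing, hence bounded near $+\infty$) keeps it negligible. For the gradient term $\psi(l)\int_{\{u>l\}}B(|\nabla_k u|)\langle\nabla_k u,\nabla_k\phi\rangle\,w_k\,dx$, I would estimate $B(|\nabla_k u|)|\nabla_k u|\,|\nabla_k\phi|$ using Lemma~\ref{N-function} with $s_1=|\nabla_k u|$, $s_2=|\nabla_k\phi|$ to split it as $\Lambda(|\nabla_k u|)+\Lambda(|\nabla_k\phi|)$ up to the $M$-condition constant; since $u\in W_k^{1,\Lambda}$ both $\Lambda(|\nabla_k u|)$ and $\Lambda(|\nabla_k\phi|)$ are $w_k$-integrable on $K$, and the domain $K\cap\{u>l\}$ shrinks to a null set, so the integral tends to $0$ and the bounded factor $\psi(l)$ does not disturb this. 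Combining the two pieces gives $C_k(l)\to 0$, and assembling the three limits produces precisely the asserted Dunkl--Caccioppoli inequality. I should double-check that $\psi(l)$ genuinely stays bounded as $l\to\infty$; if it were allowed to grow, one would instead need the stronger decay $w_k(\{u>l\}\cap K)=o(1/\psi(l))$, which the global membership $u\in W_k^{1,\Lambda}$ should still furnish, but the cleanest route is to exploit that $\psi$ is non-increasing so that $\psi(l)\le\psi(l_0)$ for all $l\ge l_0$.
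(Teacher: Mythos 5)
Your proposal is correct and follows essentially the same route as the paper: the paper's proof of Theorem \ref{thrm-1} is exactly the passage to the limit $l\to\infty$ in the local estimate \eqref{Lemma2} via monotone convergence. Your explicit verification that $C_k(l)\to 0$ (using the compact support of $\phi$, the local integrability of $b\Phi(u)\chi_{\{u>0\}}$, the bound $B(|\nabla_k u|)\,|\nabla_k u|\,|\nabla_k\phi|\le \Lambda(|\nabla_k u|)+\Lambda(|\nabla_k\phi|)$ from Lemma \ref{N-function}, and the boundedness of $\psi(l)$) merely fills in a step the paper's one-line proof leaves implicit, and it is sound given the global membership $u\in W_k^{1,\Lambda}(\mathbb{R}^n)$ and the $M$-condition on $\Lambda$.
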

\begin{proof}
    The result follows from the monotone convergence theorem by taking the limit $l\to \infty$ in \eqref{Lemma2}.
\end{proof}
The Caccioppoli inequality can be regarded as a reverse form of either the Poincaré inequality or the Sobolev inequality \cite{Giaquinta-2}.  The classical Sobolev inequality estimates the norm of a function in terms of the norm of its gradient, the Caccioppoli inequality works in the opposite direction by bounding the norm of the gradient $\nabla u$ in terms of the norm of the function $u$ itself. In the Dunkl setting, Sobolev-type inequalities have been studied, and readers may refer to \cite{Velichu} for further details.

\section{Nonexistence result related to the Dunkl differential inequality} \label{S:4}
Nonexistence results are central to the study of partial differential equations, inequalities, and their applications in mathematics and modeling. Such results have also been investigated in the Dunkl setting for specific models. In \cite{Jleli-1}, Jleli, Samet, and Vetro established a Liouville-type theorem for the semilinear inequality  $-\Delta_k u \geq |u|^p$ in $\mathbb{R}^n$ and the system of inequalities $-\Delta_k v \geq |u|^p$ and $-\Delta_k u \geq |v|^p$ in $\mathbb{R}^n$ for $n\geq 1$, where $p,q>1$.  Later, in \cite{Jleli-2}, they studied the existence and nonexistence of weak solutions to the semilinear inequality $-\Delta_k u \geq \lambda \frac{u}{|x|^{2}} + |x|^{\alpha} |u|^{p} 
\quad \text{in } B_1 \setminus \{0\}$. Motivated by these results, we establish a sufficient condition for the nonexistence of solutions to the Dunkl differential inequality  $-\Delta_{k, A}u \geq b(x)\Phi(u)\chi_{\{u>0\}},$ in this section. The result is developed through a sequence of supporting lemmas. We begin by stating the main theorem and conclude with its proof, which relies on the Dunkl-Caccioppoli-type inequality.
\\ \vspace{0.15cm}\\
\noindent We need some additional definitions and assumptions for the nonexistence result.
\begin{definition}
    Let $F:[0,\infty) \to \mathbb{R}$ be a function. Then the
Legendre function of $F$ is defined as $$F^*(s)  = \sup_{t>0} \left( st- F(t) \right),$$ where $s \in \text{Dom}(F^*)=\{s\in \mathbb{R}: \sup\limits_{t>0} \left( st- F(t) \right) < \infty \}.$
\end{definition}
\textbf{Assumptions:}
\begin{enumerate}[$(i)$]
    \item $(\Phi_{t,\Lambda,g})$: Let $\Phi:[0,\infty) \to [0,\infty)$ is a continuously differentiable function with $\Phi(0)=0$ and $\Phi(s)>0$ for $s > 0,$ we define \begin{align}
        \Phi_{t,\Lambda,g}(s) =\left( \frac{\Phi(s)^t g(s)}{\Lambda(g(s))}  \right)^{\frac{1}{1-t}}, \quad 0<t<1, \label{phi-funt}
    \end{align} such that, there  exist a constant $C_\Phi >0$ independent of $s$ which satisfies 
    \begin{align*}
        g(s) \Phi_{t,\Lambda,g}^\prime(s)  \leq -C_\Phi  \Phi_{t,\Lambda,g}(s). 
    \end{align*}

    \item $(F_t)$ : Let $ F:[0,\infty)\to [0,\infty)$ be an $N$-function such that $F^*$ is a monotonically increasing $M$-function. There exist constants $D_F$, such that, for every $s>0$, the following inequality holds:
    \begin{align}
    F\left( \Lambda\left( \frac{g(s)}{\Phi_{t,\Lambda,g}(s)} \right) \frac{\Phi_{t,\Lambda,g}(u)}{g(s)} \right) 
    \leq D_F \left( \frac{\Phi(s)}{g(s) B(s)} \right)^{\frac{1}{1-t}}, 
    \label{Prop-F} 
    \end{align}
    where $t\in (0,1)$, and the function $ \frac{F^*(s)}{s}$ is locally bounded.
\end{enumerate}

\begin{theorem} \label{Ex-thrm}
Let $\Lambda$ be a non-decreasing function satisfying the $M$-condition, and define
\begin{align}
    \mathfrak{F}(l)=\frac{\Lambda\left(\frac{1}{l}\right) l^n }{\|b\|_{L^\infty(B(x_0,2l))}^{\frac{t}{1-t}} (F^*\circ \Lambda)^{-1} \left( \frac{\Lambda^{\frac{1}{1-t}} \left( \frac{1}{l}\right)}{F^*\circ \Lambda \left( \frac{1}{l}\right)}\right) }, \text{ for $l>0$ and $0<t<1$}. \label{F-function}
\end{align}

Then the following assertions hold:
\begin{enumerate}[$(i)$]
    \item If $u\in W_{k,\text{loc}}^{1,\Lambda}$ is nonnegative $G$-invariant function and satisfies the DDI, then $\Phi(u)$ is locally integrable and there exists a constant $C>0$ such that 
    \begin{align*}
        \int_{B(x_0,l)\cap \{u>0 \}} b(x)\Phi(u)(x) w_k(x)\,dx \leq C \mathfrak{F}(l),
    \end{align*} for all $x_0\in \mathbb{R}^n$ and $l>0$. 
    \item If $\lim\limits_{l\to\infty} \mathfrak{F}(l)=0$, then there does not exists nonnegative, nonzero solution $u\in W_{k,\text{loc}}^{1,\Lambda}$ satisfying the DDI.
\end{enumerate}    
\end{theorem}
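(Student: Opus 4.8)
The plan is to derive assertion $(i)$ from the Dunkl–Caccioppoli inequality (Theorem \ref{thrm-1}) by making the distinguished choice $\psi=\Phi_{t,\Lambda,g}$, and to obtain $(ii)$ from $(i)$ by letting $l\to\infty$. Fix $x_0$ and $l>0$ and take the weight pair of Proposition \ref{loc.est} to be $\psi=\Phi_{t,\Lambda,g}$ together with the given $g$; then the compatibility condition $g\psi'\le -C_\psi\psi$ is exactly assumption $(\Phi_{t,\Lambda,g})$ (with $C_\psi=C_\Phi$), while the monotonicity of $\zeta=\psi/g$ and the boundedness of $\Theta$ on $(0,l]$ follow from the standing hypotheses on $\Phi,g,\Lambda$, so Theorem \ref{thrm-1} applies. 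As test function I would take a radial, hence $G$-invariant, Lipschitz cutoff $\phi=\phi_l$ with $\phi_l\equiv1$ on $B(x_0,l)$, $\operatorname{supp}\phi_l\subset B(x_0,2l)$ and $|\nabla_k\phi_l|/\phi_l\le C/l$ on its support, chosen so that $\int_{\{\phi_l\neq0\}}\Lambda(|\nabla_k\phi_l|/\phi_l)\phi_l\,w_k\,dx<\infty$; for $G$-invariant $\phi_l$ one has $\nabla_k\phi_l=\nabla\phi_l$.

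The computational heart is an algebraic simplification. With $W(s):=\Phi(s)g(s)/\Lambda(g(s))$ one checks the two identities
\[
b\,\Phi(u)\,\psi(u) = b\,W(u)^{\frac{1}{1-t}}, \qquad \Lambda(g(u))\frac{\psi(u)}{g(u)} = W(u)^{\frac{t}{1-t}},
\]
so that, after discarding the nonnegative gradient term on the left of Theorem \ref{thrm-1} (legitimate since $\beta=C_\psi-1\ge0$) and using the $M$-condition of $\Lambda$ to split $\Lambda(\tfrac{|\nabla_k\phi|}{\phi}g(u))\le M_\Lambda\,\Lambda(\tfrac{|\nabla_k\phi|}{\phi})\,\Lambda(g(u))$, the inequality becomes
\[
\int_{\{u>0\}} b\,W(u)^{\frac{1}{1-t}}\phi\,w_k\,dx \;\le\; M_\Lambda\int_{\{u>0\}} \Lambda\!\Big(\tfrac{|\nabla_k\phi|}{\phi}\Big)\,W(u)^{\frac{t}{1-t}}\phi\,w_k\,dx.
\]

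On the right-hand side I would apply Young's inequality for the conjugate pair $(F,F^*)$, grouping together the factor whose $F$-image is controlled by assumption $(F_t)$, namely $F(\Lambda(g/\psi)\,\psi/g)\le D_F\,W^{1/(1-t)}$, against a complementary factor carrying the scale $\Lambda(|\nabla_k\phi|/\phi)\sim\Lambda(1/l)$. After inserting a free scaling parameter and optimizing it, the $F$-part is absorbed into the left-hand side while the remaining $F^*$-part is $u$-independent; this is the mechanism that produces the factor $\|b\|_{L^\infty(B(x_0,2l))}^{t/(1-t)}$ and the term $(F^*\circ\Lambda)^{-1}(\cdots)$. Bounding the residual purely spatial integral by $\int_{B(x_0,2l)}w_k\,dx$ then supplies the $l^n$ factor and gives $\int_{B(x_0,l)\cap\{u>0\}}b\,\Phi(u)\,w_k\,dx\le C\,\mathfrak{F}(l)$, the finiteness of the right-hand side simultaneously yielding the local integrability of $\Phi(u)$. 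For $(ii)$, I would observe that for fixed $x_0$ the map $l\mapsto\int_{B(x_0,l)\cap\{u>0\}}b\Phi(u)\,w_k\,dx$ is nondecreasing and, by $(i)$, dominated by $C\,\mathfrak{F}(l)$; hence if $\mathfrak{F}(l)\to0$ this integral vanishes for every $l$, so $\int_{\mathbb{R}^n}b\,\Phi(u)\,w_k\,dx=0$, and since $b>0$ and $\Phi(s)>0$ for $s>0$ this forces $w_k(\{u>0\})=0$, i.e. $u\equiv0$, contradicting the nonzero assumption.

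I expect the main obstacle to be this Young–Legendre step: one must choose the splitting of the right-hand integrand and the scaling parameter so that the $F$-image of one factor is precisely the quantity estimated in $(F_t)$ and therefore absorbable, and so that the residual $F^*$-term reassembles, after optimization, into the exact combination $\|b\|_{L^\infty}^{t/(1-t)}\,(F^*\circ\Lambda)^{-1}(\Lambda^{1/(1-t)}(1/l)/F^*\circ\Lambda(1/l))$ occurring in $\mathfrak{F}(l)$. A secondary technical point is justifying the manipulations before $\Phi(u)$ is known to be integrable, which I would handle by first running the argument with $\Phi$ replaced by a truncation $\Phi\wedge N$ and then letting $N\to\infty$ by monotone convergence.
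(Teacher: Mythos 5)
Your overall strategy (take $\psi=\Phi_{t,\Lambda,g}$, exploit the $M$-condition, bring in $(F,F^*)$, test with annular cutoffs, let $l\to\infty$) is the right family of ideas, and your algebraic identities for $W(s)=\Phi(s)g(s)/\Lambda(g(s))$ as well as the deduction of $(ii)$ from $(i)$ are correct. But the step you yourself flag as ``the main obstacle'' is a genuine gap, not a technicality. After substituting $\psi=\Phi_{t,\Lambda,g}$ into the Caccioppoli inequality and splitting with the $M$-condition, the $u$-dependent factor on the right is $\Lambda(g(u))\,\psi(u)/g(u)=W(u)^{t/(1-t)}$, whereas assumption $(F_t)$ controls $F$ of the \emph{different} quantity $\Lambda\bigl(g(u)/\Phi_{t,\Lambda,g}(u)\bigr)\,\Phi_{t,\Lambda,g}(u)/g(u)$; no splitting of your right-hand side produces this combination (trying to pass from $\Lambda(g)$ to $\Lambda(g/\psi)$ via the $M$-condition leaves an uncontrolled factor $\Lambda(\psi(u))$). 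In the paper this combination arises only by going back to the weak form \eqref{W.S of Laplace} of the DDI, inserting the factor $g(u)/\Phi_{t,\Lambda,g}(u)$ inside $\Lambda$ through Lemma \ref{N-function} with a free parameter $\epsilon$, and then running Fenchel--Young with a second parameter $\delta$ (Lemma \ref{Lemma2-4}); the Caccioppoli-derived estimate with $\psi=\Phi_{t,\Lambda,g}$ (Lemma \ref{Lemma1-4}, itself obtained from the local estimate plus Young's inequality with exponents $\frac{1}{1-t},\frac1t$, which is where the $b^{t/(t-1)}$ factor is created) is used \emph{twice}, to absorb both $\mathcal{J}_3$ and the $F$-image coming from $(F_t)$. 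It is the optimization in $\epsilon$ and $\delta$ that assembles $\mathcal{G}(\mathcal{J}_1/\mathcal{J}_2)\mathcal{J}_2$ with $\mathcal{G}(s)=s/(F^*\circ\Lambda)^{-1}(s)$; your single Young--Legendre step applied directly to the Caccioppoli inequality cannot generate the $(F^*\circ\Lambda)^{-1}$ structure of $\mathfrak{F}$.

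There are also concrete problems with your test functions. A cutoff radial about $x_0\neq 0$ is not $G$-invariant (the reflections $\sigma_\alpha$ fix hyperplanes through the origin), so $\nabla_k\phi_l\neq\nabla\phi_l$; the paper never assumes $\phi$ is $G$-invariant and instead bounds the difference part $\mathfrak{R}(\eta^r_{l,x_0})$ of the Dunkl gradient explicitly (Lemma \ref{Lemma-2 in ex}). Moreover no continuous compactly supported cutoff can satisfy $|\nabla\phi_l|/\phi_l\le C/l$ on all of $\{\phi_l\neq0\}$ (the logarithmic derivative must blow up where $\phi_l$ vanishes); this is precisely why the paper uses $\eta^r_{l,x_0}$ with $r$ large and requires $\int_0^1 U(1/s)\,s^r\,ds<\infty$ for $U=\Lambda^{1/(1-t)}$ and $U=F^*\circ\Lambda$, and why the monotonicity Lemma \ref{Lemma-3 in ex} is needed to convert the bounds on $\mathcal{J}_1^r(l),\mathcal{J}_2^r(l)$ into the bound $\mathcal{O}_r(l)\le C\,\mathfrak{F}(l)$. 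Finally, the $l^n$ in $\mathfrak{F}$ does not come from $\int_{B(x_0,2l)}w_k\,dx$ (which is of order $l^{n+2\gamma}$ for $x_0=0$) but from the Lebesgue-measure factor paired with $\|w_k\|_{L^\infty(B(x_0,2l))}$ in Lemma \ref{Lemma-2 in ex}. These scaling computations are the substance of Lemmas \ref{Lemma-2 in ex}--\ref{Lemma 2-5-ex} and cannot be replaced by the one-line assertion in your sketch.
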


The result in the Euclidean case with the classical gradient was proved using Mitidieri and Pohozaev's \cite{Mitidieri} techniques by identifying a suitable function $\phi$ and an appropriate function $\mathfrak{F}$. Motivated by these techniques, we established the nonexistence result with the aid of a Caccioppoli-type inequality. In Lemmas \ref{Lemma1-4} and \ref{Lemma2-4}, we modify the Caccioppoli-type inequality, after which we construct the function $\mathfrak{F}$ in Lemma \ref{Lemma 2-5-ex} and complete the proof.
\begin{lemma} \label{Lemma1-4}
   Assume that  
\begin{enumerate}[$(i)$]
    \item The function $\Lambda$ is non-decreasing and  satisfies the $M$-condition.
    \item The condition $(\Phi_{t,\Lambda,g})$ holds for $t \in (0,1)$.
\end{enumerate}
 Then there exists a constant $C(\Lambda,t)>0$, such that for every nonnegative $u \in W_{k,\text{loc}}^{1,\Lambda}$ satisfying DDI, we have 
    \begin{align}
        &\int_{\{u>0\}}  \left( b(x)\Phi(u(x)) +2 \beta   \frac{\Lambda(|\nabla_ku(x)|)}{g(u(x))}\right) 
         \Phi_{t,\Lambda,g}(u(x)) \phi(x)w_k(x)\,dx \notag \\
        & \leq C(\Lambda,t) \int_{\{\phi\neq 0, \nabla_k u\neq 0 \}}  b(x)^{\frac{t}{t-1}} \left( \Lambda \left( \frac{|\nabla_k \phi(x)|}{\phi(x)} \right) \right)^{\frac{1}{1-t}} \phi(x) w_k(x)\, dx. \label{Lemma1-4eq}
    \end{align}
\end{lemma}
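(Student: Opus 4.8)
The plan is to derive \eqref{Lemma1-4eq} directly from the global Dunkl--Caccioppoli estimate of Theorem \ref{thrm-1} by making the specific choice $\psi = \Phi_{t,\Lambda,g}$, simplifying the resulting right-hand side with the $M$-condition and the definition \eqref{phi-funt}, and then splitting it with a weighted Young inequality so that part of it can be absorbed back into the left-hand side. First I would check that $\psi = \Phi_{t,\Lambda,g}$ is an admissible weight in Theorem \ref{thrm-1}: assumption $(ii)$, namely $g(s)\Phi_{t,\Lambda,g}'(s) \le -C_\Phi \Phi_{t,\Lambda,g}(s)$, is precisely the compatibility condition $(a)$ of Proposition \ref{loc.est} with $C_\psi = C_\Phi$, so that $\beta = C_\Phi - 1$ and the Caccioppoli inequality applies with this $\psi$, giving
\begin{align*}
\int_{\{u>0\}} \Bigl( b\Phi(u) + \beta\tfrac{\Lambda(|\nabla_k u|)}{g(u)}\Bigr)\Phi_{t,\Lambda,g}(u)\,\phi\, w_k\,dx \le \int_{\{\nabla_k u\neq 0,\,u>0,\,\phi\neq0\}} \Lambda\Bigl(\tfrac{|\nabla_k\phi|}{\phi}g(u)\Bigr)\tfrac{\Phi_{t,\Lambda,g}(u)}{g(u)}\,\phi\,w_k\,dx.
\end{align*}

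Next I would process the right-hand integrand. The $M$-condition of $\Lambda$ gives $\Lambda\bigl(\tfrac{|\nabla_k\phi|}{\phi}g(u)\bigr) \le M_{\Lambda}\,\Lambda\bigl(\tfrac{|\nabla_k\phi|}{\phi}\bigr)\Lambda(g(u))$. The key algebraic simplification comes from \eqref{phi-funt}: writing $W := \tfrac{g(u)\Phi(u)}{\Lambda(g(u))}$, a direct exponent computation yields the two identities $\Lambda(g(u))\tfrac{\Phi_{t,\Lambda,g}(u)}{g(u)} = W^{t/(1-t)}$ and $\Phi(u)\Phi_{t,\Lambda,g}(u) = W^{1/(1-t)}$. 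Hence the right-hand integrand is bounded by $M_{\Lambda}\,\Lambda\bigl(\tfrac{|\nabla_k\phi|}{\phi}\bigr)W^{t/(1-t)}\phi$, in which the only remaining $u$-dependence sits in the factor $W^{t/(1-t)}$.

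Then I would apply the weighted arithmetic--geometric inequality $a^t c^{1-t} \le t\varepsilon^{1-t}a + (1-t)\varepsilon^{-t}c$ to the factorization
\begin{align*}
\Lambda\Bigl(\tfrac{|\nabla_k\phi|}{\phi}\Bigr)W^{t/(1-t)} = \bigl(b\,W^{1/(1-t)}\bigr)^{t}\Bigl(b^{t/(t-1)}\Lambda\bigl(\tfrac{|\nabla_k\phi|}{\phi}\bigr)^{1/(1-t)}\Bigr)^{1-t},
\end{align*}
which is valid because the two exponents sum to $t+(1-t)=1$. Using the second identity $b\,W^{1/(1-t)} = b\Phi(u)\Phi_{t,\Lambda,g}(u)$, this bounds the right-hand side of the Caccioppoli inequality by $M_{\Lambda} t\varepsilon^{1-t}\int b\Phi(u)\Phi_{t,\Lambda,g}(u)\phi\,w_k\,dx + M_{\Lambda}(1-t)\varepsilon^{-t}\int_{\{\phi\neq0,\nabla_k u\neq 0\}} b^{t/(t-1)}\Lambda\bigl(\tfrac{|\nabla_k\phi|}{\phi}\bigr)^{1/(1-t)}\phi\,w_k\,dx$, the second integral being exactly the target right-hand side up to a constant.

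Finally I would absorb and rescale: choosing $\varepsilon$ so small that $M_{\Lambda} t\varepsilon^{1-t} = \tfrac12$, the first term moves to the left and halves the coefficient of $b\Phi(u)$ there, leaving $\int\bigl(\tfrac12 b\Phi(u) + \beta\tfrac{\Lambda(|\nabla_k u|)}{g(u)}\bigr)\Phi_{t,\Lambda,g}(u)\phi\, w_k\,dx \le M_{\Lambda}(1-t)\varepsilon^{-t}\int b^{t/(t-1)}\Lambda(\cdots)^{1/(1-t)}\phi\, w_k\,dx$; multiplying through by $2$ restores the coefficient $1$ on $b\Phi(u)$ and produces the coefficient $2\beta$ on the gradient term, yielding \eqref{Lemma1-4eq} with $C(\Lambda,t) = 2M_{\Lambda}(1-t)\varepsilon^{-t}$. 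I expect the main obstacle to be the legitimacy of the absorption step, since rearranging $A \le \tfrac12 A + CB$ into $A \le 2CB$ requires $A = \int b\Phi(u)\Phi_{t,\Lambda,g}(u)\phi\,w_k\,dx < \infty$, which is not automatic; I would secure this from the compact support of $\phi$ together with the local integrability $b\Phi(u)\chi_{\{u>0\}}\in L^1_{\mathrm{loc}}$ built into the definition of the DDI, if necessary after truncating $u$ at level $l$ and passing to the limit monotonically.
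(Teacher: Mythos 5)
Your proposal is correct and follows essentially the same route as the paper: take $\psi=\Phi_{t,\Lambda,g}$ in the Dunkl--Caccioppoli estimate, split the right-hand side with the $M$-condition, apply Young's inequality with a tunable parameter using the identity $\Lambda(g(u))\tfrac{\Phi_{t,\Lambda,g}(u)}{g(u)}=\bigl(\Phi(u)\Phi_{t,\Lambda,g}(u)\bigr)^{t}\cdot$(the $u$-free part), absorb the $b\Phi\,\Phi_{t,\Lambda,g}$ term, and rescale to get the coefficient $2\beta$. The only difference is that you start from the global Theorem~\ref{thrm-1} while the paper works with the local estimate \eqref{Lemma2} (including $C_k(l)$) and lets $l\to\infty$ by monotone convergence at the end, which is precisely the truncation you already flag as the safeguard for the absorption step, so the two arguments coincide in substance.
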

\begin{proof}
    Using  $\psi=\Phi_{t,\Lambda,g}$ in  \eqref{Lemma2},  we obtain
    \begin{align} 
   & \int_{\{0<u<l\}}\left( b(x)\Phi(u(x)) + 
    \beta\frac{\Lambda(|\nabla_ku(x)|)}{g(u(x))}
    \right) \Phi_{t,\Lambda,g}(u(x)) \phi(x)w_k(x)\,dx \notag \\
    & \leq  \int_{\{\nabla_ku \neq 0,0<u<l, \phi \neq0\}} \Lambda\left( \frac{|\nabla_k \phi(x)|}{\phi(x)}g(u(x)) \right) \frac{\Phi_{t,\Lambda,g}(u(x))}{g(u(x))}\phi(x) w_k(x)\,dx + C_k(l).  \label{Lemma-4-1}
\end{align} Invoking the $M$-conditions of $\Lambda$, we have  
   \begin{align*}
       \Lambda\left( \frac{|\nabla_k \phi(x)|}{\phi(x)}g(u(x)) \right) \leq M_{\Lambda} \Lambda \left(  \frac{|\nabla_k \phi(x)|}{\phi(x)} \right) \Lambda(g(u(x))) .  
   \end{align*} Consequently, the right hand side of \eqref{Lemma-4-1} can be bounded by \begin{align}
      M_{\Lambda} \int_{\{\nabla_ku \neq 0,0<u<l, \phi \neq0\}} \Lambda \left(  \frac{|\nabla_k \phi(x)|}{\phi(x)} \right) \Lambda(g(u(x)))   \frac{\Phi_{t,\Lambda,g}(u(x))}{g(u(x))}\phi(x) w_k(x)\,dx + C_k(l). \label{Lemma-4-3}
   \end{align} 
   Applying Young's inequality $s_1s_2 \leq (1-t)s_1^{\frac{1}{1-t}}+ts_2^{\frac{1}{t}}$ for $0<t<1$,  with the choice  $$s_1=\frac{1}{cb(x)^t}\Lambda \left(  \frac{|\nabla_k \phi(x)|}{\phi(x)} \right) \text{ and } s_2=cb(x)^t \frac{\Lambda(g(u(x)))}{g(u(x))} \Phi_{t,\Lambda,g}(u(x)),$$ where $c>0$ an arbitrary constant and using \eqref{phi-funt}, we observe that \eqref{Lemma-4-3} can be bounded by 
   \begin{align}
        M_{\Lambda}& \frac{(1-t)}{c^\frac{1}{1-t}}
        \int_{\{\nabla_ku \neq 0,0<u<l, \phi \neq0\}}  
        b(x)^{\frac{t}{t-1}} \Lambda \left(\frac{|\nabla_k \phi(x)|}{\phi(x)}  \right)^{\frac{1}{1-t}} \phi(x)w_k(x)\,dx \notag \\
        +& M_{\Lambda} c^{\frac{1}{t}} t  \int_{\{\nabla_ku \neq 0,0<u<l\}}   b(x) \Phi(u(x)) \Phi_{t,\Lambda,g}(u(x)) \phi(x)w_k(x)\,dx + C_k(l). \label{Lemma-4-4}
   \end{align} Taking into account the inequality \eqref{Lemma-4-1} and
   setting $c=\left(\frac{t^{-\frac{1}{t}}}{2M_{\Lambda}}\right)^t$ in  \eqref{Lemma-4-4}, we get
   \begin{align}
      & \int_{\{\nabla_ku \neq 0,0<u<l\}}  \left( b(x) \Phi(u(x)) + 2 \beta \frac{\Lambda(|\nabla_ku(x)|)}{g(u(x))}\right) \Phi_{t,\Lambda,g}(u(x)) \phi(x)w_k(x)\,dx  \notag \\
     \leq &  C(\Lambda,t) 
        \int_{\{\nabla_ku \neq 0,0<u<l, \phi \neq0\}}  
        b(x)^{\frac{t}{t-1}} \Lambda \left(\frac{|\nabla_k \phi(x)|}{\phi(x)}  \right)^{\frac{1}{1-t}} \phi(x)w_k(x)\,dx + 2C_k(l). \notag
 \end{align} Thus, by the monotone convergence theorem, we obtain the result by taking the  $l\to \infty$ on both sides.  
\end{proof}

\noindent Now, we modify Lemma \ref{Lemma1-4} by avoiding the $\nabla_k u$ term in \eqref{Lemma1-4eq} and provide a bound independent of $u$.

\begin{lemma}\label{Lemma2-4}
   Suppose 
\begin{enumerate}[$(i)$]
    \item $\Lambda$ satisfy the $M$-condition.
    \item For $t \in (0,1)$, the conditions $(\Phi_{t,\Lambda,g})$ and $({F}_t)$ hold.
\end{enumerate}
 Then there exists a constant $C(A,\Phi,t, F)>0$ such that, for every nonnegative function $u \in W_{k,\text{loc}}^{1,\Lambda}$ satisfying DDI, and for every nonnegative compactly supported Lipschitz function $\phi$, for which the integrals
    \begin{align*}
        \mathcal{J}_1&:= \int_{\{ \phi \neq0\}}  
        b(x)^{\frac{t}{t-1}} \Lambda \left(\frac{|\nabla_k \phi(x)|}{\phi(x)}  \right)^{\frac{1}{1-t}} \phi(x)w_k(x)\,dx \\ \mathcal{J}_2&:=  \int_{\{\phi \neq 0 \}} b(x)
        F^*\left( \frac{1}{b(x)} \right)
        F^*\left( \Lambda\left( \frac{|\nabla_k\phi(x)|}{\phi(x)}\right)  \right) \phi(x)w_k(x) 
    \end{align*} are finite, we have 
    \begin{align*}
        \int_{\{ u>0 \}} b(x) \Phi(u(x))\phi(x)w_k(x)\,dx \leq C(A,\Phi,t, F) \mathcal{G}\left( \frac{\mathcal{J}_1}{\mathcal{J}_2} \right) \mathcal{J}_2,
    \end{align*} where $\mathcal{G}(s)= \frac{s}{\left(F^*\circ\Lambda\right)^{-1}(s)}$.
\end{lemma}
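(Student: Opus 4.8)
The plan is to start from the conclusion of Lemma~\ref{Lemma1-4}. Discarding the nonnegative gradient term on the left of \eqref{Lemma1-4eq} and using $\{\phi\neq 0,\ \nabla_k u\neq 0\}\subseteq\{\phi\neq 0\}$, I obtain the a priori bound
\begin{align*}
\int_{\{u>0\}} b(x)\Phi(u(x))\,\Phi_{t,\Lambda,g}(u(x))\,\phi(x)w_k(x)\,dx \leq C(\Lambda,t)\,\mathcal{J}_1 .
\end{align*}
The task is then to pass from this weighted integral to $\int_{\{u>0\}}b\Phi(u)\phi\,w_k$, that is, to strip off the weight $\Phi_{t,\Lambda,g}(u)$, at the cost of the second quantity $\mathcal{J}_2$ built from the conjugate $F^*$.

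The bridge between the two is the hypothesis $(F_t)$. First I would record that, by the definition \eqref{phi-funt} and $\Lambda(s)=B(s)s^2$, the right-hand side of \eqref{Prop-F} equals $D_F\,\Phi(s)\,\Phi_{t,\Lambda,g}(s)$, so that $(F_t)$ reads
\begin{align*}
F\!\left(\Lambda\!\left(\tfrac{g(s)}{\Phi_{t,\Lambda,g}(s)}\right)\tfrac{\Phi_{t,\Lambda,g}(s)}{g(s)}\right)\leq D_F\,\Phi(s)\,\Phi_{t,\Lambda,g}(s).
\end{align*}
Thus the quantity already controlled by $\mathcal{J}_1$ is, up to the constant $D_F$, the integral of $b\,F(\cdot)\phi w_k$ of a specific $u$-expression.

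Next I would introduce a free scaling parameter $\sigma>0$ and apply the Fenchel--Young inequality $xy\le F(x)+F^*(y)$ for the Legendre pair $(F,F^*)$, coupling the $u$-dependent factor furnished by $(F_t)$ with the gradient factor $\Lambda(|\nabla_k\phi|/\phi)$ that enters through the right-hand side of the Caccioppoli estimate of Theorem~\ref{thrm-1} (with $\psi=\Phi_{t,\Lambda,g}$) on $\{\nabla_k\phi\neq 0\}$. The splitting is arranged so that the $F$-slot reproduces the argument of $(F_t)$ and, after integration against $b\phi w_k$, is absorbed into $\int b\Phi\,\Phi_{t,\Lambda,g}\,\phi\,w_k\le C\mathcal{J}_1$, while the $F^*$-slot carries $\tfrac{1}{b}\Lambda(|\nabla_k\phi|/\phi)$. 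The $M$-condition for $F^*$ then factorises $F^*\big(\tfrac{1}{b}\Lambda(|\nabla_k\phi|/\phi)\big)\le M_{F^*}F^*(1/b)\,F^*\big(\Lambda(|\nabla_k\phi|/\phi)\big)$, producing exactly the integrand of $\mathcal{J}_2$. Carrying the scaling through yields, for every $\sigma>0$, an inequality of the shape
\begin{align*}
\sigma\int_{\{u>0\}}b\,\Phi(u)\,\phi\,w_k\,dx \;\leq\; C_1\,\mathcal{J}_1 \;+\; C_2\,(F^*\!\circ\Lambda)(\sigma)\,\mathcal{J}_2 .
\end{align*}

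Finally I would optimise in $\sigma$. Since $\Lambda$ is non-decreasing and $F^*$ is a monotonically increasing $M$-function, $F^*\!\circ\Lambda$ is invertible on its range, and the choice $\sigma=(F^*\!\circ\Lambda)^{-1}(\mathcal{J}_1/\mathcal{J}_2)$ balances the two terms; dividing by $\sigma$ gives
\begin{align*}
\int_{\{u>0\}}b\,\Phi(u)\,\phi\,w_k\,dx \;\leq\; C\,\frac{\mathcal{J}_1}{(F^*\!\circ\Lambda)^{-1}(\mathcal{J}_1/\mathcal{J}_2)} \;=\; C\,\mathcal{G}\!\left(\tfrac{\mathcal{J}_1}{\mathcal{J}_2}\right)\mathcal{J}_2,
\end{align*}
which is the assertion. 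I expect the main obstacle to be this coupled Young step: one must choose the factorisation so that all of the $u$-dependence collapses---via $(F_t)$ together with Lemma~\ref{Lemma1-4}---into the $\mathcal{J}_1$-term, while the conjugate factor becomes genuinely independent of $u$ and reproduces the $b\,F^*(1/b)\,F^*(\Lambda(|\nabla_k\phi|/\phi))$ density of $\mathcal{J}_2$; making the $b$-weights line up is precisely what forces the $M$-condition for $F^*$. A secondary point is to justify the invertibility of $F^*\!\circ\Lambda$ and the finiteness of all the integrals involved, which holds under the standing hypotheses $\mathcal{J}_1,\mathcal{J}_2<\infty$ and from which the local integrability of $\Phi(u)$ also follows.
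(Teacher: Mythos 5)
There is a genuine gap at the centre of your plan: you never derive the claimed intermediate inequality
\begin{align*}
\sigma\int_{\{u>0\}}b\,\Phi(u)\,\phi\,w_k\,dx \;\leq\; C_1\,\mathcal{J}_1 \;+\; C_2\,(F^*\!\circ\Lambda)(\sigma)\,\mathcal{J}_2 ,
\end{align*}
and the mechanism you describe cannot produce it. Everything you manipulate after your first step lives on the \emph{right-hand side} of the Caccioppoli estimate (Theorem \ref{thrm-1} with $\psi=\Phi_{t,\Lambda,g}$, i.e.\ Lemma \ref{Lemma1-4}); but the left-hand side of that estimate is the \emph{weighted} integral $\int b\,\Phi(u)\,\Phi_{t,\Lambda,g}(u)\,\phi\,w_k$, so no amount of Fenchel--Young splitting of its right-hand side can make the unweighted integral $\int b\,\Phi(u)\,\phi\,w_k$ appear on the left. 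Nor can the weight be stripped pointwise: $\Phi_{t,\Lambda,g}(u)$ is small precisely where $u$ is large (the compatibility condition forces it to be decreasing along $u$), so $b\Phi(u)\phi\leq b\Phi(u)\Phi_{t,\Lambda,g}(u)\phi$ fails there. The missing idea is a \emph{second} use of the differential inequality: test \eqref{W.S of Laplace} with $v=\phi$ itself to get $\int_{\{u>0\}}b\Phi(u)\phi\,w_k\leq\int B(|\nabla_ku|)\,|\nabla_ku|\,|\nabla_k\phi|\,w_k$, insert the factor $\tfrac{g(u)}{\Phi_{t,\Lambda,g}(u)}\cdot\tfrac{\Phi_{t,\Lambda,g}(u)}{g(u)}$ together with the scaling parameter, and apply the $N$-function inequality of Lemma \ref{N-function} with $s_1=|\nabla_ku|$. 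This splits the right-hand side into a term $\int\Lambda(|\nabla_ku|)\tfrac{\Phi_{t,\Lambda,g}(u)}{g(u)}\phi\,w_k$ --- which is absorbed precisely by the gradient term of \eqref{Lemma1-4eq} that you discarded in your first step --- plus the term to which your $M$-condition/Fenchel--Young/$(F_t)$ machinery applies. This is exactly how the paper proceeds.

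Once that step is inserted, the remainder of your outline is sound and essentially reproduces the paper's argument: the $F$-slot is controlled by $(F_t)$ (your identification of the right-hand side of \eqref{Prop-F} with $D_F\,\Phi(s)\Phi_{t,\Lambda,g}(s)$ is the same reading the paper uses implicitly, with $B(g(s))$ understood) and then by Lemma \ref{Lemma1-4}, while the $F^*$-slot yields the density of $\mathcal{J}_2$ via the $M$-condition of $F^*$. Your single-parameter optimisation is also viable and in fact slightly cleaner than the paper's, which introduces two parameters $\epsilon$ and $\delta$ (one in the $N$-function step, one inside the Fenchel--Young coupling) and chooses them sequentially before pulling constants out of $\Lambda^{-1}$ and $(F^*)^{-1}$; to realise your one-parameter shape you must keep $\sigma$ inside the argument handed to $F^*$ and only then factor out $F^*(\Lambda(\sigma))$ by the $M$-conditions, rather than factoring $\Lambda(\sigma)$ out at the earlier stage. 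As written, however, the proposal asserts rather than proves the key inequality, and the step it omits is the one that makes the unweighted integral controllable at all.
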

\begin{proof}
Using the relation \eqref{W.S of Laplace}, we have 
\begin{align*}
    \int_{\{u>0\}}b(x) \Phi(u(x))\phi(x)w_k(x)\,dx   & \leq 
    \int_{\{\nabla_ku\neq 0, u>0\}} B(|\nabla_ku(x)| |\nabla_ku(x)| |\nabla_k\phi(x)|w_k(x)\,dx 
    \\
    &  = \epsilon \int_{\{\nabla_ku\neq 0, u>0\}}  \frac{\Lambda(|\nabla_k u(x)|) }{|\nabla_ku(x)|}  \left( \frac{|\nabla_k\phi(x)|}{ \epsilon \phi(x)} \,\frac{g(u(x))}{\Phi_{t,\Lambda,g}(u(x))}
     \right) \\
     &\quad \times \frac{\Phi_{t,\Lambda,g}(u(x))}{g(u(x))}\phi(x)w_k(x)\,dx,
\end{align*} where $\epsilon>0$, is an arbitrary constant. Applying Lemma \ref{N-function} to the function $\Lambda$ with $$s_1=|\nabla_k u(x)|  \text{ and } s_2=\frac{|\nabla_k\phi(x)|}{\epsilon \phi(x)} \,\frac{g(u(x))}{\Phi_{t, \Lambda,g}(u(x))}$$ the integral boils down to
\begin{align}
     \int_{\{u>0\}}b(x) \Phi(u(x))\phi(x)w_k(x)\,dx &\leq  \epsilon\left(\mathcal{J}_3 + \mathcal{J}_4\right), \label{Lemma-4-2-J1}
\end{align}
where 
\begin{align*}
    \mathcal{J}_3 &= \int_{\{\nabla_ku\neq 0, u>0\}} \Lambda(|\nabla_k u(x)| \frac{\Phi_{t,\Lambda,g}(u(x))}{g(u(x))}\phi(x)w_k(x)\,dx \\
    \mathcal{J}_4 &=\int_{\{ \nabla_k\neq 0, \phi \neq 0\}} \Lambda \left( \frac{1}{\epsilon} \frac{|\nabla_k\phi(x)|}{\phi(x)} \,\frac{g(u(x))}{\Phi_{t,\Lambda,g}(u(x))}
     \right) \frac{\Phi_{t,\Lambda,g}(u(x))}{g(u(x))}\phi(x)w_k(x)\,dx.
\end{align*}
Invoking Lemma \ref{Lemma1-4}, we obtain 
\begin{align}
    \mathcal{J}_3 \leq C(\Lambda,t) \int_{\{\phi\neq 0, \nabla_k u\neq 0 \}}  b(x)^{\frac{t}{t-1}} \left( \Lambda \left( \frac{|\nabla_k \phi(x)|}{\phi(x)} \right) \right)^{\frac{1}{1-t}} \phi(x) w_k(x) dx \leq C(\Lambda,t)\mathcal{J}_1. \label{Lemma-4-2-J2}
\end{align}
The $M$-condition of $\Lambda$ yields the following estimate 
\begin{align}
    \mathcal{J}_4 \leq & M_{\Lambda}^2 \Lambda\left( \frac{1}{\epsilon} \right) \int_{\{ \nabla_ku\neq 0, \phi\neq 0\}} \Lambda\left( \frac{|\nabla_k \phi(x)|}{\phi(x)} \right) \Lambda\left( \frac{g(u(x))}{\Phi_{t,\Lambda,g}(u(x))} \right) \frac{\Phi_{t,\Lambda,g}(u(x))}{g(u(x))}\phi(x)w_k(x)\,dx \notag \\
    =& M_{\Lambda}^2 \Lambda\left( \frac{1}{\epsilon} \right)\delta \int_{\{ \nabla_ku\neq 0, \phi\neq 0\}} Z_1(x) \frac{Z_2(x)}{\delta b(x)}b(x) \phi(x)w_k(x)\,dx, \label{Lemma4-2-1}
\end{align}
where $$Z_1(x)= \Lambda\left( \frac{g(u(x))}{\Phi_{t,\Lambda,g}(u(x))} \right) \frac{\Phi_{t,\Lambda,g}(u(x))}{g(u(x))} \text{  ,  }  Z_2(x)= \Lambda\left( \frac{|\nabla_k \phi(x)|}{\phi(x)} \right),$$ and $\delta$ is an arbitrary positive real number. Applying Fenchel-Young inequality $(st\leq F(s)+F^*(t))$ to the function $F$ with the parameters $s= Z_1(x)$ and $t=\frac{Z_2(x)}{\delta b(x)}$, along with the inequality \eqref{Prop-F}, we obtain, 

\begin{align}
     Z_1(x) \frac{Z_2(x)}{\delta b(x)} & \leq  F\left( \Lambda\left( \frac{g(u(x))}{\Phi_{t,\Lambda,g}(u(x))} \right) \frac{\Phi_{t,\Lambda,g}(u(x))}{g(u(x))} \right) + F^* \left( \frac{1}{\delta b(x)} \Lambda\left( \frac{|\nabla_k \phi(x)|}{\phi(x)}\right)  \right) \notag \\
     & \leq D_F \left( \frac{\Phi(u(x))}{g(u(x))B(u(x))} \right)^{\frac{1}{1-t}} + F^* \left( \frac{1}{\delta b(x)} \Lambda\left( \frac{|\nabla_k \phi(x)|}{\phi(x)}\right) \right). \label{Lemma-4-2-2}
\end{align}
In light of \eqref{phi-funt}, \eqref{Lemma-4-2-2},  and Lemma \ref{Lemma-4-1}, the bound \eqref{Lemma4-2-1} can be modified as 
\begin{align}
 \mathcal{J}_4 & \leq    M_{\Lambda}^2 \Lambda\left( \frac{1}{\epsilon} \right) D_F\delta \Bigg\{ \int_{\{ \nabla_ku\neq 0, \phi\neq 0\}}  \left( \frac{\Phi(u(x))}{g(u(x))B(u(x))} \right)^{\frac{1}{1-t}} b(x)\phi(x)w_k(x)\,dx \notag \\
  & \hspace{2cm}+ 
    \int_{\{ \nabla_ku\neq 0, \phi\neq 0\}}
 F^* \left( \frac{1}{\delta b(x)} \Lambda\left( \frac{|\nabla_k \phi(x)|}{\phi(x)}\right)  \right) b(x)\phi(x)w_k(x)\,dx \Bigg\} \notag \\
 & \leq  M_{\Lambda}^2 \Lambda\left( \frac{1}{\epsilon} \right) D_F\delta \Bigg\{ C(\Lambda,t) 
        \int_{\{ \phi \neq0\}}  
        b(x)^{\frac{t}{t-1}} \Lambda \left(\frac{|\nabla_k \phi(x)|}{\phi(x)}  \right)^{\frac{1}{1-t}} \phi(x)w_k(x)\,dx \notag \\
    & \hspace{2cm}+ 
    \int_{\{ \nabla_ku\neq 0, \phi\neq 0\}}
 F^* \left( \frac{1}{\delta b(x)} \Lambda\left( \frac{|\nabla_k \phi(x)|}{\phi(x)}\right)  \right) b(x)\phi(x)w_k(x)\,dx \Bigg\} \notag \\
 & \leq  M_{\Lambda}^2 \Lambda\left( \frac{1}{\epsilon} \right) D_F\delta \mathcal{J}, \label{Lemma-4-2-J3}
\end{align} where $$ \mathcal{J}= C(\Lambda,t) \mathcal{J}_1 + M_{F^*}^2 F^*\left(\frac{1}{\delta}\right)\mathcal{J}_2 . $$
Combining the inequalities \eqref{Lemma-4-2-J1}, \eqref{Lemma-4-2-J2}, and \eqref{Lemma-4-2-J3}, we have 
\begin{align}
     \int_{\{u>0\}}b(x) \Phi(u(x))\phi(x)w_k(x)\,dx  \leq \epsilon C(\Lambda,t)\mathcal{J}_1 + \epsilon M_{\Lambda}^2 \Lambda\left( \frac{1}{\epsilon} \right) D_F\delta \mathcal{J}. \label{Lemma-4-2-J4}
\end{align}
If $\mathcal{J}=0$, the result follows directly.  
Now, assume $\mathcal{J}\neq 0$, we choose 
$ \frac{1}{\epsilon} = \Lambda^{-1} \left(\frac{C(\Lambda,t)\mathcal{J}_1 }{M_{\Lambda}^2 D_F\delta \mathcal{J}}
\right)$ then \eqref{Lemma-4-2-J4} becomes 
\begin{align}
     \int_{\{u>0\}}b(x) \Phi(u(x))\phi(x)w_k(x)\,dx  \leq \frac{2C(\Lambda,t)\mathcal{J}_1 }{ \Lambda^{-1} \left(\frac{C(\Lambda,t)\mathcal{J}_1 }{M_{\Lambda}^2 D_F\delta \mathcal{J}}
\right)} \label{Lemma-4-2-J}
\end{align} for every $\delta>0$. For particular value of $\delta$, $\frac{1}{\delta}= (F^{*})^{-1}\left( \frac{C(\Lambda,t) \mathcal{J}_1}{M_{F^*}^2 \mathcal{J}_2 } \right),$ the inequality  \eqref{Lemma-4-2-J} boils down to the form 
\begin{align*}
      \int_{\{u>0\}}b(x) \Phi(u(x))\phi(x)w_k(x)\,dx  \leq  \frac{2C(\Lambda,t)\frac{\mathcal{J}_1 }{\mathcal{J}_2}}{ \Lambda^{-1} \left( \frac{1}{2M_{\Lambda}^2 D_F\delta} (F^*)^{-1} \left( \frac{C(\Lambda,t)}{M_{F^*}^2} \frac{\mathcal{J}_1}{\mathcal{J}_2}  \right)
\right)
      } \mathcal{J}_2.
\end{align*}
Thus, the desired result will follow from the properties of $(F^*)^{-1}$ and $\Lambda^{-1}$. 
\end{proof}
We use the following Lipschitz function $\eta$ on $[0,\infty)$ as the auxiliary function
\[
\eta(s) = \begin{cases}
          1 & \text {if \,\,}  0\leq s \leq 1, \\
          -s+2  & \text {if \,\,} 1\leq s \leq 2, \\
          0  & \text {if \,\,} s\geq 2.
       \end{cases}
    \]
Let $r\geq 1$, we define the compactly supported Lipschitz function $\eta_{l,x_0}^r$ on $\mathbb{R}^n$ by   $\eta_{l,x_0}^r(x) =\left( \eta\left( \frac{|x_0-x|}{l}\right)\right) ^r$, and set 
\begin{align*}
    \mathcal{J}_1^r(l) = & \int_{B(x_0,2l)} b(x)^{\frac{t}{t-1}}  \left(\Lambda \left(  \frac{|\nabla_k \eta_{l,x_0}^r(x)|} {\eta_{l,x_0}^r(x)}\right)  \right)^{\frac{1}{1-t}} \eta_{l,r}(x)w_k(x)\,dx,\\
    \mathcal{J}_2^r(l) = & \int_{B(x_0,2l)} b(x) F^*\left( \frac{1}{b(x)} \right) F^*\left(\Lambda  \left(  \frac{|\nabla_k \eta_{l,x_0}^r(x)|} {\eta_{l,x_0}^r(x)}\right)\right)\eta_{l,x_0}^r(x)w_k(x)\,dx.
    \end{align*}
The following lemma details some of the crucial properties of the above defined functions $\mathcal{J}_1^r(l), \mathcal{J}_2^r(l)$ and $\eta_{l,x_0}^r$.

\begin{lemma} \label{Lemma-2 in ex}
    If $U$ is a non-decreasing $N$-function satisfying the $M$-condition, then there exists an $r\geq0$ such that 
    \begin{enumerate}[$(i)$]
        \item $\int_0^1 U\left( \frac{1}{s}\right)s^r ds < \infty$
        \item 
            $\int_{B(x_0,2l)} b(x)^{\frac{t}{t-1}} U \left(  \frac{|\nabla_k \eta_{l,x_0}^r(x)|} {\eta_{l,x_0}^r(x)}\right)  \eta_{l,r}(x)w_k(x)\,dx \leq 
             \|b\|_{L^\infty(B(x_0,2l))}^{\frac{t}{t-1}} C(r,l)$,
        \item 
           $ \int_{B(x_0,2l)}  b(x) F^*\left( \frac{1}{b(x)}\right)  U  \left(  \frac{|\nabla_k \eta_{l,x_0}^r(x)|} {\eta_{l,x_0}^r(x)}\right) \eta_{l,r}(x)w_k(x)\,dx 
             \leq C(F^*,b)C(r,l) $,
             
    \end{enumerate}
         where 
         \begin{align*}
             C(r,l) =& U\left( \frac{r}{l}\right)M_U U(C(\eta_{l,x_0}^r,k)+1) \\
             & \times \left( w_k(B(x_0, l))+ M_U \|w_k\|_{L^\infty(B(x_0,2l))}\theta_{n-1}2^{n-1}l^n \int_0^1 U\left( \frac{1}{s}\right)s^r ds    
        \right).
         \end{align*}
     
\end{lemma}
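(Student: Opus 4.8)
The plan is to fix one exponent $r$ that simultaneously makes the radial integral in $(i)$ converge and drives the estimates in $(ii)$ and $(iii)$, and then to prove $(ii)$--$(iii)$ by a pointwise bound on the Dunkl-gradient quotient $|\nabla_k\eta_{l,x_0}^r|/\eta_{l,x_0}^r$ followed by splitting $B(x_0,2l)$ into the inner ball $B(x_0,l)$ and the transition annulus $\{l<|x-x_0|<2l\}$. For part $(i)$, the point is that the $M$-condition forces polynomial growth: taking $s_2=2$ gives $U(2s)\le M_U U(2)U(s)$, so $U$ is doubling, and iterating yields $U(t)\le C\,t^{q}$ for $t\ge1$ with $q=\log_2(M_U U(2))$. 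Hence $U(1/s)\le C\,s^{-q}$ on $(0,1]$ and $\int_0^1 U(1/s)s^{r}\,ds\le C\int_0^1 s^{r-q}\,ds<\infty$ once $r>q-1$; I fix such an $r\ge1$ once and for all (the lower bound $r\ge1$ also makes the Lipschitz estimates below valid).

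Next I estimate the Dunkl gradient of $\eta_{l,x_0}^r=\rho^{\,r}$, where $\rho(x)=\eta(|x-x_0|/l)$. This function is Lipschitz with constant at most $r/l$ (the inner map is $1/l$-Lipschitz, $\eta\in[0,1]$ is $1$-Lipschitz, and $s\mapsto s^{r}$ is $r$-Lipschitz on $[0,1]$). The classical part $\partial_j\eta_{l,x_0}^r$ is supported in the annulus and bounded by $(r/l)\rho^{\,r-1}$, contributing $(r/l)\eta^{-1}$ to the quotient. For the reflection part I use $|x-\sigma_\alpha(x)|=|\langle\alpha,x\rangle_{\mathbb{R}^n}|\,|\alpha|=\sqrt2\,|\langle\alpha,x\rangle_{\mathbb{R}^n}|$ together with the Lipschitz bound to get $|\eta_{l,x_0}^r(x)-\eta_{l,x_0}^r(\sigma_\alpha(x))|/|\langle\alpha,x\rangle_{\mathbb{R}^n}|\le\sqrt2\,(r/l)$; summing over $\mathcal{R}_+$ against $k(\alpha)|\alpha(j)|$ produces the reflection constant $C(\eta_{l,x_0}^r,k)$. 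Combining both parts (the classical term supplying the ``$+1$'') gives the pointwise bound $|\nabla_k\eta_{l,x_0}^r(x)|/\eta_{l,x_0}^r(x)\le(r/l)\bigl(C(\eta_{l,x_0}^r,k)+1\bigr)\eta(|x-x_0|/l)^{-1}$. Applying $U$ and splitting the triple product by the $M$-condition, $U(abc)\le M_U^2\,U(a)U(b)U(c)$, isolates the factors $U(r/l)$ and $U(C(\eta_{l,x_0}^r,k)+1)$ appearing in $C(r,l)$, leaving the singular factor $U(1/\eta)$ inside the integral.

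It then remains to integrate $U(1/\eta)\,\eta_{l,x_0}^r\,w_k$. On $B(x_0,l)$ one has $\eta\equiv1$, so the integrand is constant and this region contributes $w_k(B(x_0,l))$ (with a single $M_U$, since the $1/\eta$ factor is absent there). On the annulus $\eta=2-|x-x_0|/l$; bounding $w_k\le\|w_k\|_{L^\infty(B(x_0,2l))}$, passing to polar coordinates (surface constant $\theta_{n-1}$, radial factor $\rho^{\,n-1}\le2^{n-1}l^{n-1}$) and substituting $s=2-\rho/l$ (so $d\rho=l\,ds$) turns the integral into $\theta_{n-1}2^{n-1}l^{n}\int_0^1 U(1/s)s^{\,r}\,ds$, which is finite by $(i)$ and is exactly the second summand of $C(r,l)$ (the extra $M_U$ coming from the triple split above). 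Finally $(ii)$ and $(iii)$ follow by pulling the $b$-dependent weights out of the integral by their extremal values on $B(x_0,2l)$, yielding the factors $\|b\|_{L^\infty(B(x_0,2l))}^{t/(t-1)}$ and $C(F^*,b)=\|b\,F^*(1/b)\|_{L^\infty(B(x_0,2l))}$ respectively.

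The main obstacle is the control of the reflection part, i.e.\ the finiteness and correct scaling of $C(\eta_{l,x_0}^r,k)$ on the annulus. Unlike the classical gradient, the reflection term compares $\eta_{l,x_0}^r$ at $x$ and at $\sigma_\alpha(x)$, and when $x$ is near the outer boundary while $\sigma_\alpha(x)$ is pushed deep inside the support, the naive quotient $|\eta_{l,x_0}^r(x)-\eta_{l,x_0}^r(\sigma_\alpha(x))|/\eta_{l,x_0}^r(x)$ degenerates as $\eta\to0$. I expect to resolve this by exploiting the $G$-invariance of $w_k$: the map $x\mapsto\sigma_\alpha(x)$ is a measure-preserving involution for $w_k\,dx$ under which $\langle\alpha,x\rangle_{\mathbb{R}^n}$ changes sign, so changing variables symmetrizes the reflected contribution and reduces it to the radial estimate already controlled on the annulus, so that the single constant $C(\eta_{l,x_0}^r,k)$ suffices inside the integral.
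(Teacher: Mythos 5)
Your overall strategy is the same as the paper's: polynomial growth of $U$ from the $M$-condition for part $(i)$, a pointwise bound on $|\nabla_k\eta_{l,x_0}^r|/\eta_{l,x_0}^r$, splitting $B(x_0,2l)$ into $B(x_0,l)$ and the annulus, the $M$-condition to peel off $U(r/l)$ and $U(C(\eta_{l,x_0}^r,k)+1)$, polar coordinates with $s=2-\rho/l$ on the annulus, and part $(iii)$ from the local boundedness of $F^*(s)/s$. The gap is exactly where you flag it, and your proposal does not close it. The Lipschitz estimate only gives $|\eta_{l,x_0}^r(x)-\eta_{l,x_0}^r(\sigma_\alpha(x))|/|\langle\alpha,x\rangle_{\mathbb{R}^n}|\le \sqrt2\,r/l$, a bound that does not decay as $\eta\to0$; dividing by $\eta_{l,x_0}^r(x)$ on the annulus therefore produces $\eta^{-r}$, not the $\eta^{-1}$ you assert in the combined bound $|\nabla_k\eta_{l,x_0}^r|/\eta_{l,x_0}^r\le (r/l)\bigl(C(\eta_{l,x_0}^r,k)+1\bigr)\eta^{-1}$. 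With $\eta^{-r}$ in place of $\eta^{-1}$ the radial integral becomes $\int_0^1 U(s^{-r})s^r\,ds$, which need not converge for the large $r$ forced by $(i)$, so the annulus estimate collapses. The degeneracy is real when $x_0\neq0$: since $\sigma_\alpha$ reflects through a hyperplane containing the origin rather than $x_0$, a point $x$ near the outer boundary of $B(x_0,2l)$ can have $\sigma_\alpha(x)$ well inside $B(x_0,l)$ (for instance $n=1$, $\sigma(x)=-x$, $x_0=3$, $l=2$, $x=-1+\epsilon$), in which case the reflected difference quotient stays bounded below while $\eta^{r-1}(x)\to0$.

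Your proposed repair, symmetrizing via the $w_k\,dx$-measure-preserving involution $x\mapsto\sigma_\alpha(x)$, is only sketched and does not obviously work: the quantity to be controlled is $U\bigl(|\nabla_k\eta_{l,x_0}^r|/\eta_{l,x_0}^r\bigr)\,\eta_{l,x_0}^r$ times weights, which is neither linear nor antisymmetric in the reflected difference, so the change of variables merely exchanges the roles of $x$ and $\sigma_\alpha(x)$ without removing the factor $1/\eta_{l,x_0}^r(x)$, and it does not ``reduce to the radial estimate.'' For comparison, the paper handles this step by asserting the pointwise bound \eqref{reflction}, in which the reflection term on the annulus also carries the factor $\left(2-\frac{|x-x_0|}{l}\right)^{r-1}$ — precisely the estimate you would need — with no justification beyond the Lipschitz property; so you have correctly located the crux of the lemma, but as written your proof of $(ii)$ and $(iii)$ is incomplete at this point.
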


\begin{proof}
    We begin the proof by recalling the $M$-condition of $U$. It immediately follows that there exists some $\tau>0$ such that $U(s)\leq cs^\tau U(1)$, for all $s>1$. Then the integral $ \int_0^1 U\left( \frac{1}{s}\right) s^r ds = \int_1^\infty U(s)\frac{1}{s^{2+r}}ds$ converges for $r>\tau-1$. As a result, we choose $r > \max\{1, \tau-1\}$. Now consider the Dunkl gradient 
    \begin{align*}
        \nabla_k \eta_{l,x_0}^r & = \left( \mathcal{T}_1 \eta_{l,x_0}^r, \mathcal{T}_2 \eta_{l,x_0}^r,\cdots , \mathcal{T}_n \eta_{l,x_0}^r \right)\\ 
        & = \left(\partial_1 \eta_{l,x_0}^r, \partial_2 \eta_{l,x_0}^r, \cdots , \partial_n \eta_{l,x_0}^r \right)+ \left( \mathfrak{R}_1(\eta_{l,x_0}^r), \mathfrak{R}_2(\eta_{l,x_0}^r), \cdots , \mathfrak{R}_n(\eta_{l,x_0}^r)  \right)\\
        & = \nabla \eta_{l,x_0}^r + \mathfrak{R}(\eta_{l,x_0}^r),
    \end{align*} 
    where  \begin{align*}
        \mathfrak{R}_j(\eta_{l,x_0}^r)(x) = \sum _{\alpha \in \mathcal{R}_+}k(\alpha) \alpha(j) \frac{\eta_{l,x_0}^r(x)-\eta_{l,x_0}^r(\sigma_\alpha(x))}{\langle \alpha, x \rangle}, \quad \text{ for } j=1,2,\cdots, n.
    \end{align*}
   
We observe that 
\begin{align}
    |\nabla \eta_{l,x_0}^r(x)| \leq \frac{r}{l} \left(-\frac{|x-x_0|}{l}+2 \right)^{r-1}\chi_{\{ l\leq |x-x_0| \leq 2l\}} \quad \text{a.e.,} \label{gradient}
\end{align} and using the Lipschitz property of $\eta_{l,x_0}^r$, for each $x\in B(x_0,2l)$ we obtain 
\begin{align}
    |\mathfrak{R}_j(\eta_{l,x_0}^r)(x)| & \leq \sqrt{2} \sum _{\alpha \in \mathcal{R}_+}k(\alpha) \bigg| \frac{\eta_{l,x_0}^r(x)-\eta_{l,x_0}^r(\sigma_\alpha(x))}{\langle \alpha, x \rangle} \bigg| \notag \\
    & \leq C(\eta_{l,x_0}^r,k)\frac{r}{l} \Bigg\{ \left(-\frac{|x-x_0|}{l}+2 \right)^{r-1}\chi_{\{ l\leq |x-x_0| < 2l\}} + \chi_{B(x_0,l)}\Bigg\} \label{reflction}
\end{align} a.e. for some generic constant $C(\eta_{l,x_0}^r,k)>0$.
Using \eqref{gradient} and \eqref{reflction} we compute the bound for the Dunkl gradient 
\begin{align*}
     |\nabla_k \eta_{l,x_0}^r(x)| \leq \frac{r}{l} \left( \left(-\frac{|x-x_0|}{l}+2 \right)^{r-1}\chi_{\{ l\leq |x-x_0| \leq 2l\}} \left( 1+ C(\eta_{l,x_0}^r,k) \right) + C(\eta_{l,x_0}^r,k)\chi_{B(x_0,l)}\right).
\end{align*}
Invoking the properties of $U$ we have  
\begin{align*}
    & \int_{B(x_0,2l)} b(x)^{\frac{t}{t-1}}  U \left(  \frac{|\nabla_k \eta_{l,x_0}^r(x)|} {\eta_{l,x_0}^r(x)}\right)   \eta_{l,r}(x)w_k(x)\,dx\\
    & \leq U\left( \frac{rC(\eta_{l,x_0}^r,k)}{l} \right)
    \int_{B(x_0,l)} b(x)^{\frac{t}{t-1}}    w_k(x)\,dx + \int_{\{l< |x-x_0|<2l \}} b(x)^{\frac{t}{t-1}}  U \left(  \frac{|\nabla_k \eta_{l,x_0}^r(x)|} {\eta_{l,x_0}^r(x)}\right)   \eta_{l,r}(x)w_k(x)\,dx\\ 
    & \leq \|b\|_{L^\infty(B(x_0,2l))}^{\frac{t}{t-1}}U\left( \frac{r}{l}\right)M_U  \Bigg\{ U(C(\eta_{l,x_0}^r,k)) w_k(B(x_0, l)) \\
    & \quad + M_U U(C(\eta_{l,x_0}^r,k)+1)\int_{\{l\leq |x-x_0|\leq 2l\}} U \left( \frac{ 1 }{-\frac{|x-x_0|}{l}+2 } \right) \left(-\frac{|x-x_0|}{l}+2 \right)^{r} w_k(x)\,dx
    \Bigg\}\\
    & \leq C_1 + C_2 \int_{0}^1 U\left( \frac{1}{s} \right) s^r ds,
\end{align*} where $C_1= \|b\|_{L^\infty(B(x_0,2l))}^{\frac{t}{t-1}}U\left( \frac{r}{l}\right)M_U  U(C(\eta_{l,x_0}^r,k)) w_k(B(x_0, l))$ and $$\text{ \,\,\, } C_2= \|b\|_{L^\infty(B(x_0,2l))}^{\frac{t}{t-1}}U\left( \frac{r}{l}\right)M_U^2U(C(\eta_{l,x_0}^r,k)+1)\|w_k\|_{L^\infty(B(x_0,2l))}\theta_{n-1}2^{n-1}l^n . $$
Here $\theta_{n-1}$ is the surface measure of $\mathbb{S}^{n-1}$ with respect to the restricted Lebesgue measure on $\mathbb{R}^n$. Thus, we have the desired result. Property $(iii)$ will directly follow from the fact that $ \sup_{x\in \overline{B(x_0, 2l)}}\frac{F^*\left(\frac{1}{b(x)}\right)}{\frac{1}{b(x)}}$ is finite and property $(ii)$.

\end{proof}
The following lemma is crucial in the proof of nonexistence.
\begin{lemma}\cite{Kalamajska-1} \label{Lemma-3 in ex}
    Let $U$ be an arbitrary strictly convex function. Then for any $s_1,s_2>0$, the functions $\Psi_1(s_1)= \frac{{s_1}}{U^{-1}\left(\frac{s_1}{s_2}\right)}$ and $\Psi_2(s_2)= \frac{{s_1}}{U^{-1}\left(\frac{s_1}{s_2}\right)}$  are increasing on $[0,\infty).$
\end{lemma}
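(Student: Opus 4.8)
The plan is to establish the two monotonicity assertions separately, reducing each to an elementary property of convex functions. I would first fix the working hypotheses on $U$: since the lemma is applied with $U=F^*\circ\Lambda$, which is an increasing function vanishing at $0$, I take $U$ to be strictly convex and increasing on $[0,\infty)$ with $U(0)=0$, so that $U^{-1}$ is well-defined, strictly increasing, and satisfies $U^{-1}(0)=0$.

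The claim for $\Psi_2$, regarded as a function of $s_2$ with $s_1>0$ fixed, I expect to follow immediately and without any use of convexity. As $s_2$ increases the argument $s_1/s_2$ decreases; since $U^{-1}$ is increasing, $U^{-1}(s_1/s_2)$ then decreases, so its reciprocal grows and $\Psi_2(s_2)=s_1/U^{-1}(s_1/s_2)$ increases.

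For $\Psi_1$, regarded as a function of $s_1$ with $s_2>0$ fixed, I would substitute $\tau=s_1/s_2$ and write $\Psi_1=s_2\,\tau/U^{-1}(\tau)$; because $s_2$ is a positive constant and $\tau$ increases with $s_1$, it suffices to prove that $\tau\mapsto\tau/U^{-1}(\tau)$ is increasing. Setting $w=U^{-1}(\tau)$, equivalently $\tau=U(w)$, and using that $w$ increases with $\tau$, this in turn reduces to the monotonicity of $w\mapsto U(w)/w$. Here I would invoke the standard slope fact: for convex $U$ with $U(0)=0$ the quotient $U(w)/w=(U(w)-U(0))/(w-0)$ is non-decreasing, and strictly increasing under strict convexity. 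Concretely, for $0<w_1<w_2$ write $w_1=(1-\lambda)\cdot 0+\lambda w_2$ with $\lambda=w_1/w_2\in(0,1)$; convexity gives $U(w_1)\le(1-\lambda)U(0)+\lambda U(w_2)=(w_1/w_2)U(w_2)$, which rearranges to $U(w_1)/w_1\le U(w_2)/w_2$.

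The only genuinely delicate point is the bookkeeping of the hypotheses: strict convexity alone neither guarantees that $U^{-1}$ exists nor that $U(0)=0$, so I would state explicitly that the lemma is used for the $N$-function-type $U$ arising in the application, for which both the monotonicity of $U^{-1}$ and the slope-monotonicity of $U(w)/w$ are available. Granting these, both conclusions are immediate compositions of monotone maps and require no further computation.
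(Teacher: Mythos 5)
Your argument is correct: the monotonicity of $\Psi_2$ in $s_2$ follows from the monotonicity of $U^{-1}$ alone, and the monotonicity of $\Psi_1$ in $s_1$ reduces, via $\tau=s_1/s_2$ and $w=U^{-1}(\tau)$, to the standard slope inequality $U(w_1)/w_1\le U(w_2)/w_2$ for convex $U$ with $U(0)=0$, which you prove correctly. Note that the paper itself offers no proof of this lemma --- it is quoted from Ka{\l}amajska et al.\ \cite{Kalamajska-1} --- so there is nothing internal to compare against; your write-up is essentially the standard argument from that reference, and your remark that ``strictly convex'' must in practice be supplemented by $U(0)=0$, monotonicity and invertibility (all satisfied by $U=F^*\circ\Lambda$ as used in Lemma \ref{Lemma 2-5-ex}) is a fair and useful clarification of the statement as printed.
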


\begin{lemma} \label{Lemma 2-5-ex}
    Let $\mathcal{O}_r(l)=\mathcal{G}\left( \frac{\mathcal{J}_1^r(l)}{\mathcal{J}_2^r(l)} \right) \mathcal{J}_2^r(l) $ where $\mathcal{G}(s)$ is given in Lemma \ref{Lemma2-4}. Then there exists an $r>0$ such that 
    $$\mathcal{O}_r(l) \leq  C(r,\eta_{l,x_0}^r, \Lambda, F^*)  \mathfrak{F}(l),$$ 
    where $\mathfrak{F}$ is given in \eqref{F-function} and the constant $ C(r,\eta_{l,x_0}^r, \Lambda, F^*) $ is independent of $l$.
\end{lemma}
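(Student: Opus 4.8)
The plan is to unwind the definition of $\mathcal{O}_r(l)$, bound its two constituent integrals $\mathcal{J}_1^r(l)$ and $\mathcal{J}_2^r(l)$ by means of Lemma \ref{Lemma-2 in ex}, and then push these bounds through the nonlinear expression $\mathcal{G}$ using the monotonicity recorded in Lemma \ref{Lemma-3 in ex}. First I would substitute $\mathcal{G}(s)=s/(F^*\circ\Lambda)^{-1}(s)$ to rewrite
$$\mathcal{O}_r(l)=\mathcal{G}\left(\tfrac{\mathcal{J}_1^r(l)}{\mathcal{J}_2^r(l)}\right)\mathcal{J}_2^r(l)=\frac{\mathcal{J}_1^r(l)}{(F^*\circ\Lambda)^{-1}\left(\mathcal{J}_1^r(l)/\mathcal{J}_2^r(l)\right)}.$$
This is precisely the quantity $\Psi_1$ of Lemma \ref{Lemma-3 in ex} evaluated with $U=F^*\circ\Lambda$, $s_1=\mathcal{J}_1^r(l)$ and $s_2=\mathcal{J}_2^r(l)$; since $F^*\circ\Lambda$ is strictly convex, that lemma tells us the map $(s_1,s_2)\mapsto s_1/(F^*\circ\Lambda)^{-1}(s_1/s_2)$ is non-decreasing in each variable separately. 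Consequently, any upper bounds for $\mathcal{J}_1^r(l)$ and $\mathcal{J}_2^r(l)$ may be inserted directly to produce an upper bound for $\mathcal{O}_r(l)$.

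Next I would produce those bounds from Lemma \ref{Lemma-2 in ex}, applied twice. Taking $U_1:=\Lambda^{1/(1-t)}$, part $(ii)$ gives $\mathcal{J}_1^r(l)\le \|b\|_{L^\infty(B(x_0,2l))}^{t/(t-1)}\,C_{U_1}(r,l)$, while taking $U_2:=F^*\circ\Lambda$, part $(iii)$ gives $\mathcal{J}_2^r(l)\le C(F^*,b)\,C_{U_2}(r,l)$, where $C_{U_i}(r,l)$ denotes the constant of Lemma \ref{Lemma-2 in ex} with $U=U_i$. To legitimize these applications one checks that both $U_1$ and $U_2$ are non-decreasing $N$-functions satisfying the $M$-condition: for $U_1$ this holds because $1/(1-t)>1$ and raising an $N$-function with the $M$-condition to a power $\ge 1$ preserves both properties, and for $U_2$ it follows from the closure of the $M$-condition under composition, using that $F^*$ is a monotone increasing $M$-function. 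I would then fix a single $r$ by taking the maximum of the two thresholds produced by part $(i)$ for $U_1$ and $U_2$ (and $r\ge 1$), so that both estimates hold with the same exponent $r$.

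Finally I would insert these two bounds into the monotone expression for $\mathcal{O}_r(l)$ and extract the $l$-scaling. The common weight--volume factor appearing in $C_{U_1}(r,l)$ and $C_{U_2}(r,l)$ has the same dependence on $l$ in both (it differs only by the $l$-independent integrals $\int_0^1 U_i(1/s)s^r\,ds$ and their attendant constants), so in the ratio $C_{U_1}(r,l)/C_{U_2}(r,l)$ it contributes only an $l$-independent factor, leaving the leading terms $U_1(r/l)=\Lambda(r/l)^{1/(1-t)}$ and $U_2(r/l)=(F^*\circ\Lambda)(r/l)$. Invoking the $M$-condition on $\Lambda$ (and on $F^*$) to replace $\Lambda(r/l)$ by $\Lambda(1/l)$ and $(F^*\circ\Lambda)(r/l)$ by $(F^*\circ\Lambda)(1/l)$ up to $l$-independent constants, the argument of $(F^*\circ\Lambda)^{-1}$ collapses to $\Lambda^{1/(1-t)}(1/l)/(F^*\circ\Lambda)(1/l)$ and the numerator to $\|b\|_{L^\infty(B(x_0,2l))}^{t/(t-1)}\Lambda(1/l)\,l^n$, which is exactly the shape of $\mathfrak{F}(l)$ in \eqref{F-function}; collecting the discarded constants into $C(r,\eta_{l,x_0}^r,\Lambda,F^*)$ completes the estimate.

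The hard part will be precisely this last bookkeeping step: the function $\mathfrak{F}(l)$ in \eqref{F-function} has been reverse-engineered to be the bound one obtains here, so the work lies in carefully matching the $\Lambda$-powers, the factor $\|b\|_{L^\infty(B(x_0,2l))}$, and the weight--volume terms through the nonlinear barrier $(F^*\circ\Lambda)^{-1}$. The role of Lemma \ref{Lemma-3 in ex} is indispensable, since without the separate monotonicity in each argument one could not replace $\mathcal{J}_1^r(l)$ and $\mathcal{J}_2^r(l)$ by their upper estimates inside $\mathcal{G}$; and the repeated use of the $M$-condition is what allows every $r/l$ argument to be reduced to the canonical $1/l$ that appears in $\mathfrak{F}$.
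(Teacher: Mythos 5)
Your proposal is correct and follows essentially the same route as the paper: rewrite $\mathcal{O}_r(l)=\mathcal{J}_1^r(l)/(F^*\circ\Lambda)^{-1}\bigl(\mathcal{J}_1^r(l)/\mathcal{J}_2^r(l)\bigr)$, apply Lemma \ref{Lemma-2 in ex} with $U_1=\Lambda^{\frac{1}{1-t}}$ and $U_2=F^*\circ\Lambda$, use the separate monotonicity from Lemma \ref{Lemma-3 in ex} to insert those upper bounds, and then reduce the $r/l$ arguments to $1/l$ via the $M$-conditions to recover $\mathfrak{F}(l)$. Your explicit verification that $U_1$ and $U_2$ satisfy the hypotheses of Lemma \ref{Lemma-2 in ex}, and the choice of a single $r$ as the maximum of the two thresholds, only make explicit steps the paper leaves implicit.
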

\begin{proof}
   We begin the proof by considering $U_1= \Lambda^\frac{1}{1-t}$  and $U_2= F^*\circ\Lambda$ in Lemma \ref{Lemma-2 in ex} and also choose an $r>0$ such that the functions $\mathcal{J}_1^r(l)$ and $\mathcal{J}_2^r(l)$ are finite  for each $l>0$. In particular, we have 
   \begin{align*}
       \mathcal{J}_1^r(l) & \leq   \left( \Lambda\left( \frac{r}{l}\right)\Lambda(C(\eta_{l,x_0}^r,k)+1) \right)^{\frac{1}{1-t}}C_1(r,l) \\
       \mathcal{J}_2^r(l) & \leq  F^*\left( \Lambda\left( \frac{r}{l}\right)\right)F^*\bigg(\Lambda(C(\eta_{l,x_0}^r,k)+1)\bigg) C_2(r,l).
   \end{align*} Note that, for a large value of $l$, we obtain 
   \begin{align*}
       C_1(r,l) = &M_{\Lambda^{\frac{1}{1-t}}}  \|b\|_{L^\infty(B(x_0,2l))}^{\frac{t}{t-1}}  \\
       & \times \left( w_k(B(x_0, l))+ M_{\Lambda^{\frac{1}{1-t}}} \|w_k\|_{L^\infty(B(x_0,2l))}\theta_{n-1}2^{n-1}l^n \int_0^1 \Lambda^{\frac{1}{1-t}}\left( \frac{1}{s}\right)s^r ds    
        \right)\\
        & \leq C_1({\Lambda}) \|b\|_{L^\infty(B(x_0,2l))}^{\frac{t}{t-1}} \theta_{n-1}2^{n}l^n \int_0^1 \Lambda^{\frac{1}{1-t}}\left( \frac{1}{s}\right)s^r ds:= A_1,     
   \end{align*} where $ C_1({\Lambda})=M_{\Lambda^{\frac{1}{1-t}}} \max\{ w_k(B(x_0, l)), M_{\Lambda^{\frac{1}{1-t}}} \|w_k\|_{L^\infty(B(x_0,2l))}\}$. In a similar way, we can derive for $ C_2(r,l)$ as
   \begin{align*}
       C_2(r,l) = &M_{F^*\circ \Lambda} C(F^*,b)  \\
       & \times \left( w_k(B(x_0, l))+ M_{F^*\circ \Lambda} \|w_k\|_{L^\infty(B(x_0,2l))}\theta_{n-1}2^{n-1}l^n \int_0^1 F^*\circ \Lambda\left( \frac{1}{s}\right)s^r ds    
        \right)\\
        & \leq C_2({F^*\circ \Lambda})\theta_{n-1}2^{n-1}l^n \int_0^1 F^*\circ \Lambda\left( \frac{1}{s}\right)s^r ds :=A_2,
   \end{align*} where $C_2({F^*\circ \Lambda})=M_{F^*\circ \Lambda}  \max\{ w_k(B(x_0, l)), M_{F^*\circ \Lambda} \|w_k\|_{L^\infty(B(x_0,2l))}\} $.
   Using Lemma \ref{Lemma-3 in ex}, we obtain 
   \begin{align*}
       \mathcal{G}\left( \frac{\mathcal{J}_1^r(l)}{\mathcal{J}_2^r(l)} \right) \mathcal{J}_2^r(l) & = \frac{\frac{\mathcal{J}_1^r(l)}{\mathcal{J}_2^r(l)}}{(F^*\circ \Lambda)^{-1}\left(\frac{\mathcal{J}_1^r(l)}{\mathcal{J}_2^r(l)} \right)} \mathcal{J}_2^r(l)\\
       & \leq \frac{\left( \Lambda\left( \frac{r}{l}\right)\Lambda(C(\eta_{l,x_0}^r,k)+1) \right)^{\frac{1}{1-t}} A_1 }{(F^*\circ \Lambda)^{-1}\left(\frac{\left( \Lambda\left( \frac{r}{l}\right)\Lambda(C(\eta_{l,x_0}^r,k)+1) \right)^{\frac{1}{1-t}} A_1 }{F^*\left( \Lambda\left( \frac{r}{l}\right)\right)F^*\bigg(\Lambda(C(\eta_{l,x_0}^r,k)+1)\bigg)A_2 }  \right)}\\
& \leq C(r,\eta_{l,x_0}^r, \Lambda, F^*) \frac{\Lambda\left(\frac{1}{l}\right) l^n }{\|b\|_{L^\infty(B(x_0,2l))}^{\frac{t}{1-t}} (F^*\circ \Lambda)^{-1} \left( \frac{\Lambda^{\frac{1}{1-t}}\left( \frac{1}{l}\right)}{F^*\circ \Lambda\left( \frac{1}{l}\right)} \right)}.      
   \end{align*}
   Here, the last inequality follows from the properties of the function $\Lambda$ and $F^*$, as well as their inverses.
\end{proof}

\begin{proof} of Theorem \ref{Ex-thrm}: Let $x_0\in \mathbb{R}^n$ be an arbitrary vector. Choosing $\phi= \eta_{l,x_0}^r$ in Lemma \ref{Lemma2-4} with 
 some $r\geq 1$ we notice that
\begin{align*}
     \int_{B(x_0,l)\cap \{u>0 \}}  b(x)\Phi(u)(x) w_k(x)\,dx & \leq \int_{\{ u>0 \}} b(x) \Phi(u(x))\eta_{l,x_0}^r(x)w_k(x)\,dx\\ 
     & \leq C(A,\Phi,t, F) \mathcal{O}_r(l)\\
     & \leq C(A,\Phi,t, F)  C(r,\eta_{l,x_0}^r, \Lambda, F^*) \mathfrak{F}(l).
\end{align*}
The property $(ii)$ follow from the fact that $\lim\limits_{l\to\infty}\mathfrak{F}(l)=0$. Consequently, the integral $\int_{\{u>0\}}b(x)\Phi(u(x))w_k(x)\,dx=0$, and thus we conclude that there is only a trivial solution.
\end{proof}

 \noindent\textbf{Acknowledgment:}
 The authors express their gratitude for the financial support provided by the UGC (221610147795) for the first author (AP) and the Anusandhan National Research Foundation (ANRF) with the reference number SUR/2022/005678, for the second author (SKV), and their funding and resources were instrumental in the completion of this work. \\  
\\
 \textbf{Declarations:}\\
 \textbf{Conflict of interest:}
 We would like to declare that we do not have any conflict of interest.\\ \\
 \textbf{Data availability:}
 No data sets were generated or analyzed during the current study.
\bibliographystyle{abbrvnat} 

\begin{thebibliography}{Md.}



\bibitem{Badiale}M. Badiale, G. Tarantello, A Sobolev-Hardy inequality with applications to a nonlinear elliptic equation arising in astrophysics. Arch. Rational Mech. Anal. \textbf{163}:259–293 (2002).

\bibitem{Baldelli}L. Baldelli, R. Filippucci, A priori estimates for elliptic problems via Liouville type theorems. Discrete Contin. Dyn. Syst. Ser. S. \textbf{13}:1883-1898 (2020).

\bibitem{Bidaut}M. F. Bidaut V\'eron, S. Pohozaev, Nonexistence results and estimates for some nonlinear elliptic problems. J. Anal. Math. \textbf{84}(1):1-49 (2001).

 \bibitem{Caccioppoli}R. Caccioppoli,Limitazioni integrali per le soluzioni di un'equazione lineare ellitica a derivate parziali. Giorn. Mat. Battaglini. \textbf{4}(80):186–212 (1951).

\bibitem{Caristi} G. Caristi, L. D’Ambrosio,  E. Mitidieri, Liouville theorems for some nonlinear inequalities. Proc. Steklov Inst. Math. \textbf{260}(1):90-111 (2008).

 \bibitem{Chlebicka}I. Chlebicka, P. Drábek, A. Ka\l amajska,   Caccioppoli-type estimates and Hardy-type inequalities derived from weighted $p$-harmonic problems. Rev. Mat. Complut. \textbf{32}:(3) 601-630 (2019).

 \bibitem{DJ1} M.F.E. de Jeu, The Dunkl transform. Invent. Math. \textbf{113}:147-162 (1993).

 \bibitem{DV} V.J.F. Diejen, L. Vinet,  Calogero-Sutherland-Moser models. CRM Series in Mathematical Physics. Springer-Verlag, (2000).

\bibitem{Ding}S. Ding,  Two-weight Caccioppoli inequalities for solutions of nonhomogeneous A-harmonic equations on Riemannian manifolds. Proc. Am. Math. Soc. \textbf{132}(8):2367-2375 (2004).

\bibitem{Donaldson}T.K. Donaldson,  N.S. Trudinger,   Orlicz-Sobolev spaces and imbedding theorems. J. Funct. Anal. \textbf{8}(1):52-75 (1971).


 \bibitem{D1} C.F. Dunkl, Differential-difference operators associated to reflection groups. Trans. Amer. Math. \textbf{311}(1):167-183 (1989).

 \bibitem{D2}C.F. Dunkl, Integral kernels with reflection group invariance. Canad. J. Math. \textbf{43}(6):1213-1227 (1991).

 \bibitem{D3} C.F. Dunkl, Hankel transforms associated to finite reflection groups. In: Proc. of the special session on hypergeometric functions on domains of positivity, Jack polynomials, and applications. Proceedings, Tampa 1991, Contemp. Math. 123-138 (1992).

 \bibitem{Esteban}M.J. Esteban,  P. L. Lions, Existence and non-existence results for semilinear elliptic problems in unbounded domains. Proc. Roy. Soc. Edinburgh Sect. A, \textbf{93}(1-2):1-14 (1982).

 \bibitem{Gallardo}L. Gallardo, L. Godefroy,  Propri\'e\'t e de Liouville et \'equation de Poisson pour le  Laplacien g\'en\' erali\'e de Dunkl. C.R.Math.Acad.Sci.Paris \textbf{337}:639–644 (2003).

\bibitem{Gariepy}R.F. Gariepy,  A Caccioppoli inequality and partial regularity in the calculus of variations. Proc. Roy. Soc. Edinburgh Sect. A. \textbf{112}(3-4):249-255 (1989). 

 \bibitem{Giaquinta}M. Giaquinta, Multiple integrals in the calculus of variations and nonlinear elliptic systems. Princeton University Press, Princeton (1983).

 \bibitem{Giaquinta-1}M. Giaquinta,  J.  Sou\v cek, Caccioppoli's inequality and Legendre-Hadamard condition. Math. Ann. \textbf{270}(1):105-107 (1985).

\bibitem{Giaquinta-2}M. Giaquinta, L. Martinazzi,   $L^2$-regularity: The Caccioppoli inequality. In: An Introduction to the Regularity Theory for Elliptic Systems, Harmonic Maps and Minimal Graphs. Publications of the Scuola Normale Superiore. Edizioni della Normale, Pisa (2012). 

\bibitem{Gossez}J.P. Gossez, Nonlinear elliptic boundary value problems for equations with rapidly
 (or slowly) increasing coefficients. Trans. Amer. Math. Soc. \textbf{190}:163-205 (1974).

 \bibitem{Guliyev} V. Guliyev, Y. Mammadov,   F. Muslumova, Characterization of fractional maximal operator and its commutators on Orlicz spaces in the Dunkl setting. J. Pseudo-Differ. Oper. Appl. \textbf{11}:1699–1717 (2020).

\bibitem{Harjulehto}P. Harjulehto, P. H\"ast\"o, Orlicz spaces and generalized {O}rlicz spaces, Lecture Notes in Mathematics. 2236- Springer, Cham (2019).

 \bibitem{Hu}J.E.S.  Humphreys, Reflection groups and Coxeter groups. Cambridge University Press, Cambridge, (1990).

 \bibitem{Jleli-3}M. Jleli, B. Samet,  Nonlinear Liouville-type theorem for a differential inequality involving Dunkl-Baouendi-Grushin operator. Discrete Contin. Dyn. Syst. Ser. S \textbf{18}(2):426-443 (2025).

 \bibitem{Jleli-1}M. Jleli, B. Samet,  C. Vetro,   Liouville-type results for semilinear inequalities involving the Dunkl Laplacian operator. Complex Var. Elliptic Equ. \textbf{70}(8):1321–1336 (2024).

 \bibitem{Jleli-2}M. Jleli, B. Samet,  C. Vetro,  On semilinear inequalities involving the Dunkl Laplacian and an inverse-square potential outside a ball. Adv. Nonlinear Anal. \textbf{13}(1): 20240046 (2024).

\bibitem{Jleli}M. Jleli, B. Samet, S. Zeng,   On a semilinear inequality with Dunkl Laplacian and inverse-square potential on a ball. Results Math. \textbf{80}(1):10 (2025).

 \bibitem{Kalamajska-1} A. Ka\l amajska,  K. Pietruska-Pa\l uba, I. Skrzypczak, Nonexistence results for differential inequalities involving {$A$}-Laplacian. Adv. Differ. Equ. \textbf{17}(3-4):307-336 (2012). 

 \bibitem{RKAN} R. Kane, Reflection groups and invariant theory. CMS Books Math. 5, Springer–Verlag, New York, (2001).

\bibitem{Matukuma}T. Matukuma, The Cosmos, Iwanami Shoten, Tokyo, 1938.

\bibitem{Mitidieri}E. Mitidieri, S. Pohozaev, Nonexistence of positive solutions to quasilinear elliptic problems in $\mathbb{R}^n$, Proc. Steklov. Inst. Math., 227 (1999), 186216, (translated from Tr. Mat. Inst. Steklova, 227 (1999), 192222).

\bibitem{Ni} W.M. Ni, J. Serrin, Nonexistence theorems for quasilinear partial differential
 equations, Proceedings of the conference commemorating the 1st centennial of the
 Circolo Matematico di Palermo (Palermo, 1984). Rend. Circ. Mat. Palermo (2) Suppl.
 No. 8, 171185 (1985).

\bibitem{Niu}J. Niu, G. Shi, Y. Xing,   Higher order {P}oincar\'e{} inequality and {C}accioppoli inequality with {O}rlicz norms for differential forms. J. Math. Inequal. \textbf{18}(4):1217-1232 (2024).

 \bibitem{Opdam} E.M. Opdam, Dunkl operators, Bessel functions and the discriminant of a finite Coxeter group, Compositio Math. \textbf{85}(3):333–373 (1993).

\bibitem{Ren}G. Ren, L. Liu,  Liouville theorem for Dunkl polyharmonic functions. SIGMA Symmetry Integrability Geom. Methods Appl. \textbf{4}: 6  (2008)


\bibitem{Rosler-2003}M. R\"osler,  A positive radial product formula for the Dunkl kernel. Trans. Amer. Math. Soc. \textbf{355}:2413-2438 (2003). 

\bibitem{Rosler-2008}M. R\"osler,  M. Voit,  Dunkl theory, convolution algebras, and related Markov processes, in Harmonic and stochastic analysis of Dunkl processes. P. Graczyk, M. R\"osler, M. Yor (eds.), 1–112, Travaux en cours 71, Hermann, Paris, (2008).

\bibitem{Seregin}G.A. Ser\"egin, A local estimate of the Caccioppoli inequality type for extremal variational problems of Hencky plasticity. Akad. Nauk SSSR Sibirsk. Otdel. Inst. Mat. Novosibirsk \textbf{145}:127–138 (1988).

\bibitem{Skrzypczak}I. Skrzypczak, Hardy inequalities resulted from nonlinear problems dealing with $A$-Laplacian. NoDEA Nonlinear Differential Equations Appl. \textbf{21}(6):841-868 (2014).

\bibitem{Velichu}A. Velicu,  Sobolev-type inequalities for Dunkl operators. J. Funct. Anal. \textbf{279}(7):108695 (2020).

\bibitem{Troianiello}G.M. Troianiello,   Estimates of the Caccioppoli-Schauder type in weighted function spaces. Trans. Am. Math. Soc. \textbf{334}(2): 551-573 (1992)..


\bibitem{Xing}Y. Xing,  S. Ding,   Caccioppoli inequalities with Orlicz norms for solutions of harmonic equations and applications. Nonlinearity, \textbf{23}(5):1109 (2010).
\end{thebibliography}
 
\end{document}